\begin{document}
	\title{Long-time asymptotic behavior of the nonlocal nonlinear Schr\"odinger equation with initial potential in weighted sobolev space}
	\author{Meisen Chen,   En-Gui Fan\thanks{Corresponding author: faneg@fudan.edu.cn}\\[4pt]
		School of Mathematical Sciences, Fudan University,\\
		Shanghai 200433, P.R. China}
	\maketitle
	
	\theoremstyle{plain}
	\newtheorem{proposition}{Proposition}[section]
	\newtheorem{lemma}[proposition]{Lemma}
	\newtheorem{drhp}[proposition]{$\bar\partial$-RH problem}
	\newtheorem{rhp}[proposition]{RH problem}
	\newtheorem{dbarproblem}[proposition]{$\bar\partial$-problem}
	\newtheorem{remark}[proposition]{Remark}

\begin{abstract}
	In this paper, we are going to investigate Cauchy problem for nonlocal nonlinear Schr\"odinger equation with the initial potential $q_0(x)$ in weighted sobolev space $H^{1,1}(\mathbb{R})$,
	\begin{align}
		iq_t(x,t)&+q_{xx}(x,t)+2\sigma q^2(x,t)\bar q(-x,t)=0,\quad\sigma=\pm1,\nonumber \\
		q(x,0)&=q_0(x).\nonumber
	\end{align}
	 We show that the solution can be represented by the solution of a Riemann-Hilbert problem (RH problem), and assuming no discrete spectrum, we majorly apply $\bar\partial$-steepest cescent descent method on analyzing the long-time asymptotic behavior of it. \\
	 \ \\
	 Key words: Nonlocal nonlinear Schr\"odinger equation, weighted sobolev space, long-time asymptotic behavior, Riemann-Hilbert problem, $\bar\partial$-steepest cescent descent method.
	 \ \\
	 2010 Mathematics Subject Classification Numbers: 35Q15, 35Q58, 35B40.
\end{abstract}

\baselineskip=18pt

\newpage
\tableofcontents

\section{Introduction}
\indent

The nonlocal nonlinear Schr\"odinger (NNLS) equation
	\begin{align}
		iq_t(x,t)&+q_{xx}(x,t)+2\sigma q^2(x,t)\bar q(-x,t)=0,\quad\sigma=\pm1,\label{e1}
	\end{align}
was first introduced by Ablowitz and Musslimani in 2013 \cite{Ablowitz2013integrable} .
It is an integrable system with Lax pair, and its inverse scattering transformations with zero and nonzero boundary condition have been completed by Ablowitz et al \cite{Ablowitz2016inverse, Ablowitz2018inverse}.
The long-time asymptotic analysis for the NNLS equation with rapidly decaying initial data was given by Rybalko and Shepelsky \cite{Rybalko2019long}.
In 2018, Feng et al have got the general soliton for the NNLS equation by Hirota's bilinear method and the
Kadomtsev-Petviashvili hierarchy reduction method \cite{Feng2018general}.   We note that the NNLS
have the $\mathcal{PT}$-symmetry \cite{bender1998real} potential $V(x,t)=q(x,t)\bar q(-x,t)$: $V(x,t)=\overline{V}(-x,t)$.
Recent years, there are many works for the $\mathcal{PT}$-symmetric equations \cite{Gerdjikov2016the, Ablowitz2017integrable,Sinha2017integrable,Song2017solitons,Ablowitz2014integrable}.
$\mathcal{PT}$ symmetry is also an important conception in optics \cite{Ruter2010observation,Regensburger2012parity,Regensburger2013observation}.

In this paper, we  apply   a systematic dbar-steepest cescent method to analyze the long-time asymptotic behavior of the solution for the NNLS equation (\ref{e1})
with the initial potential
	\begin{align}
		q(x,0)&=q_0(x) \in H^{1,1}(\mathbb{R}),  \label{e2}
	\end{align}
where   $H^{1,1}(\mathbb{R})$ is a  weighted Sobolev space  defined by
\begin{align*}
	&H^{1,1}(\mathbb{R})=L^{2,1}(\mathbb{R})\cap H^1(\mathbb{R}), \  \
	L^{2,1}(\mathbb{R}) =\{(1+|\cdot|^2)^\frac{1}{2}f\in L^2(\mathbb{R})\},\\
	&H^1(\mathbb{R})=\left\{f\in L^2(\mathbb{R})|f'\in L^2(\mathbb{R})\right\}.
\end{align*}
For   $f\in L^{2,1}(\mathbb{R})$,   its  norm is defined by  $\parallel f\parallel_{2,1}=\parallel(1+|\cdot|^2)^{1/2}f\parallel_2$.

The dbar-steepest descent method, developed from the Deift-Zhou steepest descent method,
is  very powerful in analyzing the long-time asymptotic behavior with potential in weighted Sobolev space \cite{deift1993long,deift1993steepest,Borghese2018long,Liu2018long,Giavedoni2017long}.
Cuccagna et al also use it to analyze the asymptotic  stability for  the soliton solutions of the NLS equation   \cite{Cuccagna2014the,Cuccagna2016on}.

As shown in \cite{zhou1998l2}, there is a bijection   map   between space of initial potential and space of reflection coefficients:
\begin{align}\label{e3}
	H^{1,1}(\mathbb{R})\to H^{1,1}(\mathbb{R}):\quad q(x)\mapsto \{ r(z),\breve{r}(z)\},
\end{align}
then, we can set the initial data in $H^{1,1}(\mathbb{R})$.
Indeed, in this paper, the proof in this article only require $\{r(z),\breve r(z)\}\subset H^1(\mathbb{R})$; however, by simply calculation, $\{r(z),\breve r(z)\}$ do not belong to $H^1(\mathbb{R})$ with time
evolution but persist in $H^{1,1}(\mathbb{R})$ as time evolving, seeing Remark \ref{r2.2};
so, by (\ref{e3}), we restrict our initial potential $q_0(x)\in H^{1,1}(\mathbb{R})$.
We also restrict the potential $q_0(x)$ to be generic: for the direct scattering, $q(x)\mapsto \{r(z),\breve r(z)\}$, the reflection coefficient do not possess any singular point along the continuous spectrum.
Moreover, we also assume that the scattering coefficient $a(z)$ possesses no zero on $\mathbb{C}_+$ while $\breve{a}(z)$ has not any zero on $\mathbb{C}_-$.

This article is organized as follows.
At section \ref{S2}, we simply display the main result of the direct scattering.
At section \ref{S3}, we construct the Riemann-Hilbert (RH)  problem based on the   Lax pair ({\ref{eq:3}}).
At section \ref{Aa}, we establish the map $q(x)\mapsto \{ r(z),\breve r(z) \}$.
At section \ref{s4}, we  carry out a series of RH problem transformations:  $M\rightsquigarrow M^{(1)}\rightsquigarrow M^{(2)}$  and factorizing $M^{(2)}$ into a product of one model RH problem $ M^{(2)}_\text{\tiny{RHP}}$ and one pure $\bar\partial$ problem $M^{(3)}$.
At section \ref{S5}, we get the long-time asymptotic behavior of  the NNLS equation.

\section{Direct Scattering Problem}\label{S2}
\indent

In this section, we state the main result of the direct scattering transformation. Based on the Lax pair (\ref{eq:3}), we obtain the Jost solution, modified Jost solution,
 the scattering matrix, their symmetric properties and their asymptotic properties as $z\to\infty$.

The NNLS equation admits the Lax pair:
\begin{subequations}\label{eq:3}
	\begin{align}
	&\phi_x+iz\sigma_3\phi=Q\phi,&&  Q\equiv Q(x,t)=\left(\begin{matrix}
	0&q(x,t)\\-\sigma \bar q(-x,t)&0
	\end{matrix}\right),\\
	&\phi_t+2iz^2\sigma_3\phi=P\phi,&&P=i\sigma_3(Q_x-Q^2)+2zQ,
	\end{align}
\end{subequations}
where $\sigma_3=\left(\begin{matrix}1&0\\0&-1
\end{matrix}\right)$ is a Pauli matrix, $\{x,t\}\subset\mathbb{R}$ and $z\in\mathbb{C}$ denotes the spectrum.
The Lax pair (\ref{eq:3}) admits the Jost solution $\phi^\pm\equiv\phi^\pm(z;x,t)$:
\begin{align}\label{e5r}
	&\phi^\pm\sim e^{-t\varphi\sigma_3},\quad\text{as}\quad x\to\pm\infty,
\end{align}
where $\varphi\equiv\varphi(z;x,t)=i(zx/t+2z^2 )$ is the phase function.
Then, it's natural for us to introduce the modified Jost solution $\Phi^\pm\equiv\Phi^\pm(z;x,t)$:
\begin{align}\label{e:4}
\Phi^\pm=\phi^\pm e^{t\varphi\sigma_3},
\end{align}
 such that
 \begin{align}
\Phi^\pm\sim I,\quad\text{as}\quad x\to\pm\infty,
\end{align}
We can derive that $\Phi^\pm(z,x)=\Phi^\pm(z;x,t)$ satisfy the following Volterra integral equations associated with (\ref{eq:3}):
\begin{align}\label{e5}
	\Phi^\pm(z,x)=I+\int_{\pm\infty}^{x}e^{iz(y-x)\hat\sigma_3}[Q(y)\Phi^\pm(z,y)]\mathrm{d}y,\quad z\in\mathbb{C}.
\end{align}
Because $q(x)\in L^{2,1}(\mathbb{R})$, and by Schwartz inequality,
\begin{align*}
	\parallel q\parallel_1\le\parallel q\parallel_{2,1}\left(\int_\mathbb{R}(1+x^2)^{-1}\right)^\frac{1}{2}\le\sqrt{\pi}\parallel q\parallel_{2,1},
\end{align*}
the $L^1$-norm of $q(x)$ is bounded.
By taking Neumann series of $\Phi^\pm $ in the Volterra integral, we naturally obtain analytic properties of $\Phi^\pm$ that are given  in the following Proposition \ref{p2.1}. We write $\Phi^\pm=(\Phi^\pm_1,\Phi^\pm_2)$.
\begin{proposition}\label{p2.1}
	For the potential $q(x)\in L^{2,1}(\mathbb{R})$, $(\Phi^-_1(z,x),\Phi^+_2(z,x))$ is uniquely defined and analytic on $\mathbb{C}_+=\{ {\rm Im} z>0\}$, and continuously extended to $z\in\mathbb{C}_+ \cup \mathbb{R}$;
In the meanwhile, $(\Phi^+_1(z,x),\Phi^-_2(z,x))$ is uniquely defined and analytic on $\mathbb{C}_-=\{{\rm Im}  z<0\}$, and continuously extended to $z\in\mathbb{C}_- \cup \mathbb{R}$. See $\mathbb{C}_+$ and $\mathbb{C}_-$ at Figure \ref{fig1}.
\end{proposition}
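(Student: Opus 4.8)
The plan is to analyze the Volterra integral equation (\ref{e5}) columnwise via its Neumann series, and track for which column the exponential kernel $e^{iz(y-x)\hat\sigma_3}$ stays bounded on each half-plane. Writing $\Phi^\pm=(\Phi^\pm_1,\Phi^\pm_2)$ and expanding $e^{iz(y-x)\hat\sigma_3}[Q(y)\Phi^\pm(z,y)]$ componentwise, one sees that the scalar weights appearing are $e^{2iz(y-x)}$ and $e^{-2iz(y-x)}$. First I would record that for the first column $\Phi^-_1$, the integration runs from $-\infty$ to $x$, so $y-x\le 0$ on the relevant diagonal entry, hence $e^{2iz(y-x)}$ is bounded precisely when $\mathrm{Im}\,z\ge 0$; symmetrically $\Phi^+_2$ (integration from $+\infty$, so $y-x\ge 0$) is controlled for $\mathrm{Im}\,z\ge 0$. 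The other pair $(\Phi^+_1,\Phi^-_2)$ is handled the same way with the inequalities reversed, giving analyticity on $\mathbb{C}_-$.

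Next I would set up the Neumann iteration $\Phi^\pm=\sum_{n\ge 0}\Phi^{\pm,(n)}$ with $\Phi^{\pm,(0)}=I$ and $\Phi^{\pm,(n+1)}$ obtained by applying the integral operator in (\ref{e5}). For $z$ in the closed half-plane corresponding to a given column, each bounded exponential factor has modulus $\le 1$, so the $n$-th term is dominated in supremum norm over $x$ by $\frac{1}{n!}\left(\int_{\mathbb{R}}|Q(y)|\,\mathrm{d}y\right)^n$; here I would invoke the bound already established in the excerpt, namely $\|q\|_1\le\sqrt{\pi}\,\|q\|_{2,1}<\infty$ (and the same for $\sigma\bar q(-\cdot)$, whose $L^1$ norm equals $\|q\|_1$). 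This gives uniform absolute and uniform convergence of the series on the relevant half-plane, so the sum is well defined, bounded, and — since each term is a finite integral of an analytic (indeed entire, in $z$) integrand that converges locally uniformly — analytic in the open half-plane by Morera/Weierstrass. Uniqueness follows from the standard Volterra/Gronwall argument: the difference of two solutions satisfies a homogeneous Volterra equation and hence vanishes.

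For the continuous extension to $z\in\mathbb{R}$ (i.e. to the closed half-plane including the boundary), I would note that the same majorant $\frac{1}{n!}\|Q\|_1^n$ is valid on the closed half-plane, so the series converges uniformly there; each term is continuous in $z$ up to the boundary because the exponential factors are continuous and dominated convergence applies to the $y$-integral (using $q,\bar q(-\cdot)\in L^1$ as the dominating control). A uniform limit of continuous functions being continuous then yields continuity of $\Phi^\pm_j$ up to $\mathbb{R}$. The main obstacle, and the only point requiring genuine care rather than bookkeeping, is the passage to the boundary $z\in\mathbb{R}$: there the diagonal exponentials no longer decay, only stay bounded, so one cannot improve convergence and must argue continuity purely from uniform convergence of the Neumann series plus dominated convergence in the spatial integral — this is exactly where the hypothesis $q\in L^{2,1}(\mathbb{R})$ (via $q\in L^1$) is used rather than mere boundedness of $q$.
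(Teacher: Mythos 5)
Your proposal is correct and follows essentially the same route as the paper, which likewise establishes $\|q\|_1\le\sqrt{\pi}\|q\|_{2,1}$ and then invokes the Neumann series of the Volterra equation (\ref{e5}), with your factorial majorant $\frac{1}{n!}\|Q\|_1^n$, Morera/Weierstrass for analyticity, and uniform convergence for continuity up to $\mathbb{R}$ filling in the details the paper only sketches. One bookkeeping slip worth fixing: for the column $\Phi^-_1$ the conjugation $e^{iz(y-x)\hat\sigma_3}$ puts the factor $e^{-2iz(y-x)}$ on the $(2,1)$ entry (an off-diagonal, not diagonal, entry), and it is this factor, with $y-x\le 0$, that has modulus $e^{2\,\mathrm{Im}z\,(y-x)}\le 1$ exactly when $\mathrm{Im}\,z\ge 0$ — your stated factor $e^{2iz(y-x)}$ would grow there — but since your assignment of columns to half-planes and all subsequent estimates agree with the corrected factor, the argument stands.
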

\begin{figure}[h]
	\centering{\includegraphics[width=0.7\linewidth]{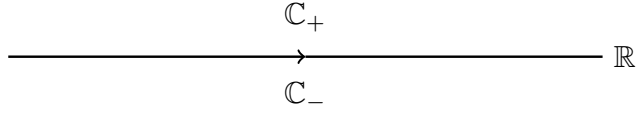}}
	\caption{\label{fig1}$\mathbb{C}_+=\{ {\rm Im} z>0\}$, $\mathbb{C}_-=\{ {\rm Im} z<0\}$ and the real line $\mathbb{R}$ considered as the continuous spectrum}
\end{figure}

Noticing in Lax pair  (\ref{eq:3}) that
\begin{align*}
	\text{tr}(Q-iz\sigma_3)=0,
\end{align*}
since $\phi^+$ and $\phi^-$ both solve  the Lax pair  (\ref{eq:3}),  we have that $\det\phi^\pm\equiv1$ and there exists a unique $2\times2$ matrix $S\equiv S(z)$ independent on $(x,t)$ such that
\begin{align}\label{e6}
	\phi^-(z;x,t)=\phi^+(z;x,t)S(z),\quad S(z)=\left(\begin{matrix}
		a(z)&\breve{b}(z)\\b(z)&\breve{a}(z)
	\end{matrix}\right),
\end{align}
where $S(z)$ is well known as scattering matrix and $a(z)$, $\breve{a}(z)$, $b(z)$, $\breve{b}(z)$ are so called scattering coefficients. By basic linear algebra, we derive from (\ref{e:4}) and (\ref{e6})
\begin{subequations}\label{e7}
\begin{align}
	a(z)&=\det(\Phi^-_1,\Phi^+_2),\ \ b(z)=\det(\Phi^+_1,\Phi^-_1)e^{-2t\varphi},\label{e7a}\\
	\breve{a}(z)&=\det(\Phi^+_1,\Phi^-_2),\ \ \breve{b}(z)=\det(\Phi^-_2,\Phi^+_2)e^{2t\varphi},\label{e7b}
\end{align}
\end{subequations}
which implies, by Proposition \ref{p2.1}, that $a(z)$ and $\breve{a}(z)$  are  analytic on $\mathbb{C}_+$ and $\mathbb{C}_-$, respectively,
and continuously extended to $\mathbb{C}_+\cup\mathbb{R}$ and $\mathbb{C}_-\cup\mathbb{R}$ respectively;
in the meanwhile,  both $b(z)$ and $\breve{b}(z)$ are continuous on $\mathbb{R}$.

By WKB expansion, someone can get the asymptotic property of the modified Jost solutions:
\begin{align}\label{e9r}
	\Phi^\pm(z;x,t)\sim I+\mathcal{O}(z^{-1}),\quad\text{as}\quad z\to\infty,
\end{align}
which with (\ref{e7}) implies that as $z\to\infty$,
\begin{subequations}\label{e9}
\begin{align}
	a(z)&\sim 1+\mathcal{O}(z^{-1}),\quad b(z)\sim \mathcal{O}(z^{-1}),\label{e9a}\\
	\breve{a}(z)&\sim 1+\mathcal{O}(z^{-1}),\quad\breve{b}(z)\sim \mathcal{O}(z^{-1}),
\end{align}
\end{subequations}
in addition, we can write the asymptotic property of $\Phi^\pm_{1,2}$ more precisely,
\begin{align}\label{e11}
	\Psi_{1,2}^\pm(z;x,t)\sim-\frac{ i}{2}q(x,t)z^{-1}+\mathcal{O}(z^{-2}),\quad\text{as}\quad z\to\infty.
\end{align}

Defining the reflection coefficients,
\begin{align}\label{e12r}
	r\equiv r(z)=\frac{b(z)}{a(z)}, \quad\breve r\equiv\breve{r}(z)=\frac{\breve{b}(z)}{\breve{a}(z)}, \quad\text{on}\quad z\in\mathbb{R},
\end{align}
we can see that if $q_0(x)\in L^{2,1}(\mathbb{R})$, these reflection coefficients belong to $H^1(\mathbb{R})$. See more detail in Section \ref{Aa}. Then, both $r(z)$ and $\bar r(z)$ are $\frac{1}{2}$-H\"older continuous on the real line and satisfy
\begin{align}\label{e11s}
	|r(z)|, |\breve{r}(z)|\lesssim (1+z^2)^{-\frac{1}{4}},\quad z\in\mathbb{R},
\end{align}
therefore, both $r(z)$ and $\breve{r}(z)$ are continuous and bounded on $z\in\mathbb{R}$; moreover, recalling (\ref{e6}) and that $\det\phi^\pm\equiv 1$, we have
\begin{align*}
	\det S(z)=a(z)\breve a(z)-b(z)\breve b(z)=1,
\end{align*}
then,
\begin{align*}
	1-r(z)\breve{r}(z)=\frac{1}{a(z)\breve{a}(z)},
\end{align*}
and, with the generic assumption, we obtain that $1-r(z)\breve{r}(z)$ is also bounded, continuous and non-vanishing on the real line.

By basic calculation, $\phi^\pm$ admits symmetry:
\begin{align}\label{e16r}
	\Lambda\overline{\phi^\pm(-\bar z;-x,t)}\Lambda^{-1}=\phi^\mp(z;x,t),\quad \Lambda=\left(\begin{matrix}
		0&\sigma\\1&0
	\end{matrix}\right),\quad z\in\mathbb{C},
\end{align}
which implies the symmetry for the corespondent scattering coefficients:
	\begin{align}\label{e13b}
		&a(z)=\overline{a(-z)},\quad \breve{a}( z)=-\overline{\breve{a}(-z)},\quad b(z)=-\sigma\overline{\breve{b}(-z)},\quad z\in\mathbb{R}.
	\end{align}

\begin{remark}\label{r2.2}
	By (\ref{e:4}), (\ref{e7a}) and (\ref{e12r}), sure we have
	\begin{align}\label{e15}
		r&=\frac{\det(\phi^+_1,\phi^-_1)}{\det(\phi^-_1,\phi^+_2)}.
	\end{align}
If we set $q\big|_{t=t_0}$ as the initial data for $t_0>0$ and $\tilde\phi^\pm\equiv\tilde\phi^\pm(z;x,t)$ as the corespondent Jost solution such that
\begin{align}\label{e16}
	\tilde\phi^\pm\sim e^{-i(zx+2z^2(t-t_0))\sigma_3},\quad x\to\pm\infty,
\end{align}
then, there is a corespondent reflection coefficient $\tilde r\equiv\tilde r(z)$ satisfying:
\begin{align}\label{e17s}
	\tilde r=\frac{\det(\tilde\phi^+_1,\tilde\phi^-_1)}{\det(\tilde\phi^-_1,\tilde\phi^+_2)}.
\end{align}
Comparing (\ref{e5r}), (\ref{e15}) with (\ref{e16}), (\ref{e17s}) respectively, we obtain the relation of $r(z)$ and $\tilde r(z)$ by uniqueness of the Jost solution:
\begin{align*}
	\tilde r(z)=r(z)e^{4t_0z^2},
\end{align*}
which means that $r(z)$ persists in $H^{1,1}(\mathbb{R})$ by simple computation. Of course, $\breve r(z)$ also does possess this property.
\end{remark}

\section{RH problem}\label{S3}
\indent

In this section, we construct the corespondent RH problem for the Lax pair and the reconstructed formula (\ref{e17}) for $q(x)$.

Introducing
\begin{align}\label{e12}
	M\equiv M(z;x,t)=\begin{cases}
		\left(\frac{\Phi_1^-(z;x,t)}{a(z)},\Phi_2^+(z;x,t)\right),\quad z\in\mathbb{C}_+,\\
		\left(\Phi_1^+(z;x,t),\frac{\Phi_2^-(z;x,t)}{\breve{a}(z)}\right), \quad z\in\mathbb{C}_-,
	\end{cases}
\end{align}
and the jump contour $\mathbb{R}$,
we observe from Section \ref{S2} that $M$ admits the following RH problem.
\begin{rhp}\label{rh3.1}
	Find a $2\times2$ matrix function on $\mathbb{C}\setminus\mathbb{R}$ such that:
\begin{itemize}
	\item Analyticity: $M$ is holomorphic on $\mathbb{C}\setminus \mathbb{R}$.
	\item Normalization:
	\begin{align*}
		M\sim I+\mathcal{O}(z^{-1}), \quad\text{as}\quad z\to\infty.
	\end{align*}
    \item Jump condition:
    \begin{align*}
    	M_+=M_-V,\quad\text{on}\quad \mathbb{R},
    \end{align*}
where
\begin{align*}
	V=e^{t\varphi\hat\sigma_3}\left(\begin{matrix}
		1-r\breve{r}&-\breve{r}\\r&1
	\end{matrix}\right).
\end{align*}
\end{itemize}
\end{rhp}

In addition, we get the reconstructed formula for the potential by (\ref{e9a}), (\ref{e11}) and (\ref{e12}),
\begin{align}\label{e17}
	q(x,t)=2i\lim_{z\to\infty}zM_{1,2}(z;x,t).
\end{align}

\section{Analysis on scattering maps: $q(x)\mapsto \{ r(z), \breve{r}(z) \}$ } \label{Aa}
\indent

In this section, we focus on the map from initial data to reflection coefficients.
Here, we denote $q_0(x)$ by $q(x)$  without confusion of notation.
\begin{proposition}\label{pa1}
	If $q(x)\in L^{2,1}(\mathbb{R})$, then the reflection coefficient $r(z)$ and $\breve{r}(z)$ both belong to $H^1(\mathbb{R})$.
\end{proposition}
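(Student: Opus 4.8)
The plan is to prove the claim by expressing the scattering coefficients $a(z)$, $\breve a(z)$, $b(z)$, $\breve b(z)$ through the Volterra integrals (\ref{e5}) via the determinant formulas (\ref{e7}), and then establishing mapping properties of these integral operators on the relevant weighted spaces. First I would set $t=0$ (so $\varphi = izx$) and record that, by Proposition \ref{p2.1} together with (\ref{e9r})--(\ref{e9a}), the functions $a(z)-1$ and $\breve a(z)-1$ are bounded and continuous on $\mathbb{R}$, and under the generic/no-discrete-spectrum assumption $a(z)$ and $\breve a(z)$ are bounded away from $0$ on $\mathbb{R}$ and at $\infty$. Hence $1/a(z)$ and $1/\breve a(z)$ are bounded, and $r=b/a$, $\breve r=\breve b/\breve a$ will inherit $H^1(\mathbb{R})$ membership as soon as I show $b,\breve b$ (equivalently $a-1,\breve a-1$) lie in $L^2(\mathbb{R})$ together with their $z$-derivatives; the algebra $H^1(\mathbb{R})\cap L^\infty$ is closed under products and under division by a function bounded below, so this reduces the problem to the four scattering coefficients themselves.

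Next I would work directly with the Volterra equation (\ref{e5}). Writing $\Phi^\pm = I + $ (Neumann series), each column of $\Phi^\pm$ solves an integral equation whose kernel involves $e^{iz(y-x)\hat\sigma_3}Q(y)$; the off-diagonal entries of $Q$ are $q(y)$ and $-\sigma\bar q(-y)$, both in $L^{2,1}(\mathbb{R})\subset L^1(\mathbb{R})$ by the Schwarz-inequality bound already proved in the excerpt. The determinants in (\ref{e7}) then produce, to leading nontrivial order, expressions of the form $b(z)\sim \int_\mathbb{R} e^{-2izy} q(y)\,dy + (\text{higher Neumann terms})$, i.e. essentially the Fourier transform of $q$ plus corrections that are multilinear in $q$ and again given by absolutely convergent iterated integrals. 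Since $q\in L^2(\mathbb{R})$, the Fourier transform lands in $L^2$; since $q\in L^{2,1}(\mathbb{R})$, i.e. $x q(x)\in L^2$, differentiating in $z$ brings down a factor $y$ and keeps us in $L^2$, giving the $H^1$ bound on the leading term. For the higher Neumann terms one estimates them in $L^2_z$ by Minkowski's integral inequality and the $L^1\cap L^2$ control on $q$ (with the extra weight handled by distributing at most one factor of $y$), and sums the resulting geometric-type series using the uniform smallness coming from $\|q\|_{L^1}$ being finite (rescaling/truncation if one wants genuine smallness). This yields $b,\breve b\in H^1(\mathbb{R})$, and by the symmetry (\ref{e13b}) the estimate for $\breve b$ follows from that for $b$.

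Combining the two parts, $r = b/a \in H^1(\mathbb{R})$ and likewise $\breve r\in H^1(\mathbb{R})$. I would also note in passing that this simultaneously yields the pointwise decay (\ref{e11s}): membership in $H^1(\mathbb{R})$ already gives the $L^\infty$ bound, and a slightly more careful use of the weight (interpolating between $L^2$ control of $r$ and of $r'$, or of $\widehat{q}$ and $\widehat{xq}$) produces the $(1+z^2)^{-1/4}$ rate and the $\tfrac12$-Hölder continuity, although strictly only the $H^1$ conclusion is asserted here.

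The main obstacle is the control of the full Neumann series rather than just its first term: one must show the iterated-integral corrections to $a,\breve a,b,\breve b$ and to their $z$-derivatives are summable in the $L^2_z$-norm uniformly, and in particular that differentiating the $n$-th Neumann term in $z$ (which can differentiate any one of the $n$ oscillatory exponentials, producing a factor $(y_j-x)$ or, after integration by parts against the other exponentials, a localized gain) does not destroy $L^2$-integrability. The clean way to organize this is to bound the $n$-th term by $C^n \|q\|_{L^1}^{n-1}$ times an $L^2$-norm of $q$ (with a single spare power of $|y|$ absorbed by the $L^{2,1}$ norm to handle the $z$-derivative), via repeated Minkowski inequality; checking that the weight can always be parked on exactly one factor, and that the operator norms telescope correctly, is the technical heart of the argument. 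The rest — boundedness of $1/a$, the quotient rule for $H^1\cap L^\infty$, and the symmetry reduction from $b$ to $\breve b$ — is routine.
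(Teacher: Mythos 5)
Your overall route is the same as the paper's: reduce the claim, via boundedness of $a,\breve a,b,\breve b$ on $\mathbb{R}$ and the generic (no real zeros) assumption, to showing $b,b',a'$ (and, for $\breve r$, also $\breve a'$) lie in $L^2(\mathbb{R})$; then control the Neumann series of the Volterra equation for the (rescaled) Jost solutions in $L^2_z$ uniformly in $x$, treating the first iterate as a Fourier transform of $q\in L^2$ and handling $\partial_z$ by parking the single factor of $y$ it produces on one copy of $q$, absorbed by $\parallel q\parallel_{2,1}$. This is precisely the structure of the paper's Section \ref{Aa}: the bilinear formulas (\ref{e54}), (\ref{e62}) (organized through the nonlocal symmetry (\ref{e16r}) so that only the first columns $Y_1^\pm$ enter), Lemma \ref{la2}, and the duality/Plancherel estimates of Propositions \ref{pa3s}--\ref{pa3}; your symmetry reduction of $\breve b$ to $b$ via (\ref{e13b}) is also what the paper does, supplemented there by the separate check that $\breve a'\in L^2$, which your plan covers by estimating $\breve a-1$ in $H^1$ by the same method.

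The one step that would fail as written is the summation of the series. You propose to bound the $n$-th iterate by $C^n\parallel q\parallel_{1}^{n-1}$ times an $L^2$ (or weighted $L^2$) norm of $q$ and to sum a ``geometric-type series using the uniform smallness coming from $\parallel q\parallel_{L^1}$ being finite (rescaling/truncation if one wants genuine smallness).'' A geometric bound with ratio comparable to $\parallel q\parallel_1$ is not summable for general $q\in L^{2,1}(\mathbb{R})$, and rescaling cannot help: $\parallel q\parallel_{L^1}$ is invariant under the natural scaling $q(x)\mapsto\lambda q(\lambda x)$, so no genuine smallness can be manufactured that way (truncation into finitely many intervals with small $L^1$ mass can be made to work, but it is a different and more laborious argument than what you sketch, since the scattering data must then be reassembled by composing the pieces). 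The point your proposal is missing is that the Volterra structure does this for free: the iterated integrals run over the ordered simplex $y_{n}<\dots<y_1<x$, so, exactly as in Propositions \ref{pa3s} and \ref{pa3}, the $(2n-1)$-st and $2n$-th terms carry a factor $\parallel q\parallel_1^{2n-2}/(n-1)!$ (respectively $\parallel q\parallel_1^{2n-1}/(n-1)!$, and $1/(n-2)!$ for the differentiated terms), and the series converges absolutely for every $q\in L^{2,1}(\mathbb{R})$ with no smallness assumption. With that factorial gain inserted in place of your geometric bound, the rest of your argument (Minkowski/Plancherel on the innermost oscillatory integral, one weight per differentiated term, boundedness of $1/a$ and $1/\breve a$, and the quotient rule in $H^1\cap L^\infty$) goes through and reproduces the paper's proof.
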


\subsection{ $r(z)\in H^1(\mathbb{R})$}
\indent

By (\ref{e7}), (\ref{e9}) and Proposition \ref{p2.1}, we learn that $a(z)$, $b(z)$, $\breve a(z)$ and $\breve b(z)$ are continuous and bounded on $z\in\mathbb{R}$.
Since the assumption that $q(x)$ is generic and the fact that
\begin{align*}
	r(z)=\frac{b(z)}{a(z)}, \quad r'(z)=\frac{1}{a(z)^2}(b'(z)a(z)-a'(z)b(z)),
\end{align*}
by the boundedness of $a(z)$ and $b(z)$, we learn that $r(z)\in H^1(\mathbb{R})$ only if
\begin{align}
	a'(z),b(z),b'(z)\in L^2(\mathbb{R}).\label{e52}
\end{align}

Introducing
\begin{align}\label{e24}
	Y^\pm\equiv Y^\pm(z,x)=e^{ixz\hat\sigma_3}\Phi^\pm(z;x,0)=e^{ixz\sigma_3}\phi^\pm(z;x,0),
\end{align}
by Proposition and (\ref{e9r}), $Y^\pm$ is bounded on $z\in\mathbb{R}$. By (\ref{e5}) and (\ref{e24}), we get the Volterra integral for $Y^\pm$:
\begin{align}\label{e53}
	Y^\pm(z,x)&=I+\int_{\pm\infty}^{x}e^{iyz\hat\sigma_3}Q(y)Y^\pm(z,y)\mathrm{d}y,
\end{align}
and by (\ref{e7a}), (\ref{e16r}) and (\ref{e24}), we have
\begin{subequations}\label{e54}
	\begin{align}
		a(z)=&Y_{1,1}^-(z,x)Y_{1,1}^-(-z,-x)+Y_{2,1}^-(z,x)Y_{2,1}^-(-z,-x),\label{e54a}\\
		a'(z)=&\partial_zY_{1,1}^-(z,x)Y_{1,1}^-(-z,-x)+\partial_zY_{2,1}^-(z,x)Y_{2,1}^-(-z,-x)\notag\\
		&-Y_{1,1}^-(z,x)\partial_zY_{1,1}^-(-z,-x)-Y_{2,1}^-(z,x)\partial_zY_{2,1}^-(-z,-x),\label{e54b}\\
		b(z)=&Y_{1,1}^+(z,x)Y_{2,1}^-(z,x)-Y_{1,1}^-(z,x)Y_{2,1}^+(z,x),\\
		b'(z)=&\partial_zY_{1,1}^+(z,x)Y_{2,1}^-(z,x)-\partial_zY_{1,1}^-(z,x)Y_{2,1}^+(z,x)\notag\\
		&+Y_{1,1}^+(z,x)\partial_zY_{2,1}^-(z,x)-Y_{1,1}^-(z,x)\partial_zY_{2,1}^+(z,x).
	\end{align}
\end{subequations}
Therefore, seeing from (\ref{e54}), (\ref{e52}) is the consequence of the boundedness of $Y^\pm$ on $z\in\mathbb{R}$ and Lemma \ref{la2}.
\begin{lemma}\label{la2}
	If $q(x)\in L^{2,1}(\mathbb{R})$, then $\left\{Y_1^\pm-\left(\begin{matrix}
		1\\0
	\end{matrix}\right),\partial_z Y^\pm_{1}\right\}\subset\mathcal{A}=L^\infty(\mathbb{R},L^2(\mathbb{R}))$, where for $f(z,x)=\left(\begin{matrix}
	f_1(z,x)\\f_2(z,x)
\end{matrix}\right)\in\mathcal{A}$,
	\begin{align*}
		\parallel f\parallel_\mathcal{A}=\sup_{x\in\mathbb{R}}\parallel f(\cdot,x)\parallel_2.
	\end{align*}
In this article, without confusion of notation, we will denote $L^2(\mathbb{R})$ and $(L^2(\mathbb{R}))^2$ uniformly by $L^2(\mathbb{R})$, and the norm of $f(\cdot,x)\in(L^2(\mathbb{R}))^2$ is denoted by
\begin{align*}
	\parallel f(\cdot,x)\parallel_2=(\parallel f_1(\cdot,x)\parallel_2^2+\parallel f_1(\cdot,x)\parallel_2^2)^\frac{1}{2}.
\end{align*}
\end{lemma}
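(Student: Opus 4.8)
The plan is to analyze the Volterra integral equation (\ref{e53}) for $Y^\pm$ directly, extracting the first column. Writing $Y^\pm = (Y^\pm_1, Y^\pm_2)$ and reading off the first column of (\ref{e53}), one gets a closed system for the two scalar components $Y^\pm_{1,1}$ and $Y^\pm_{2,1}$, in which the kernel involves $q(y)$ and $e^{\pm 2iyz}$ weight factors (from $e^{iyz\hat\sigma_3}$). The key observation is that the "off-diagonal" propagation brings in $e^{2iz(y-\cdot)}$-type oscillatory factors, so that the relevant integral operators are, after an obvious substitution, essentially convolution operators whose $L^2_z$-boundedness follows from the Plancherel theorem: an operator of the form $g(z)\mapsto \int_{\pm\infty}^x e^{2iz(y-x)} m(y) g(z)\, dy$ acting boundedly on $L^2_z$ requires control of $\|m\|_{L^1}$, which we have since $q\in L^{2,1}(\mathbb R)\subset L^1(\mathbb R)$ by the Schwarz-inequality bound already recorded in Section \ref{S2}. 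First I would set up the iteration $Y^\pm_1 = (1,0)^T + K^\pm[Y^\pm_1]$, show $K^\pm$ maps $\mathcal A$ to $\mathcal A$ with a norm controlled by $\|q\|_1$ (this is already implicit in the Neumann-series construction behind Proposition \ref{p2.1}), and conclude $Y^\pm_1 - (1,0)^T \in \mathcal A$ by summing the Neumann series.

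For the derivative $\partial_z Y^\pm_1$, I would differentiate the Volterra equation (\ref{e53}) in $z$. This produces
\[
\partial_z Y^\pm_1(z,x) = \int_{\pm\infty}^x e^{iyz\hat\sigma_3}\bigl(iy\,[\hat\sigma_3, Q(y)]\bigr)Y^\pm_1(z,y)\,dy + \int_{\pm\infty}^x e^{iyz\hat\sigma_3}Q(y)\,\partial_z Y^\pm_1(z,y)\,dy,
\]
i.e. the same Volterra operator $K^\pm$ applied to $\partial_z Y^\pm_1$ plus an inhomogeneous term $F^\pm(z,x)$ built from $y\,q(y)$ and the already-controlled $Y^\pm_1$. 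Since $K^\pm$ is a contraction-type Volterra operator on $\mathcal A$ (or at least has summable Neumann series on $\mathcal A$), it suffices to show $F^\pm \in \mathcal A$. Here the weight $y$ is exactly absorbed by the hypothesis $q\in L^{2,1}(\mathbb R)$: the worst term is $\int e^{2iz(y-x)} y\,q(y)\,(\text{bounded})\,dy$, and $\|(\,\cdot\,)q\|_1 \le \|q\|_{2,1}(\int (1+x^2)^{-1})^{1/2}$ is finite, so Plancherel again gives the $L^2_z$ bound uniformly in $x$. Then $\partial_z Y^\pm_1 \in \mathcal A$ follows.

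The main obstacle — and the only point requiring genuine care — is the bookkeeping that turns the Volterra kernel into an honest convolution so that Plancherel applies with $x$-uniform constants: one must keep the oscillatory factor $e^{2iz(y-x)}$ paired with the correct matrix entry, change variables $y\mapsto y-x$ to land on the half-line $(\mp\infty, 0)$ independent of $x$, and check that the $e^{2izy}$ factor (which has modulus $1$ for real $z$) does not spoil the $L^2_z$ estimate. Once this is arranged, the argument is a routine Neumann-series/fixed-point estimate; there is no delicate cancellation and no need to track the diagonal entries closely, since those contribute the identity part and bounded factors only. I expect no further difficulty, the symmetry relation (\ref{e16r}) being irrelevant here because we work with $Y^\pm_1$ directly rather than through the scattering data.
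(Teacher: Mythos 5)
Your treatment of $Y_1^\pm-\left(\begin{smallmatrix}1\\0\end{smallmatrix}\right)$ is fine and is essentially the paper's own route: the first Volterra iterate is, after the duality/Plancherel step, controlled by $\parallel q\parallel_2$ uniformly in $x$, and the higher iterates gain the factorial decay $\parallel q\parallel_1^{n}/n!$ from the Volterra (nested-simplex) structure, exactly as in Proposition \ref{pa3s}.

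The derivative part, however, contains a genuine gap. Differentiating (\ref{e53}) gives $\partial_z Y_1^\pm=F^\pm+K^\pm[\partial_z Y_1^\pm]$ with $F^\pm(z,x)=\int_{\pm\infty}^x 2iy\,\bigl(\text{entries of }e^{iyz\hat\sigma_3}Q(y)\bigr)\,Y_1^\pm(z,y)\,\mathrm{d}y$, and you justify $F^\pm\in\mathcal{A}$ by the inequality $\parallel(\cdot)q\parallel_1\le\parallel q\parallel_{2,1}\bigl(\int(1+x^2)^{-1}\bigr)^{1/2}$. That inequality is false: $q\in L^{2,1}(\mathbb{R})$ gives $y\,q\in L^2(\mathbb{R})$, not $y\,q\in L^1(\mathbb{R})$ (take $q(y)\sim(1+|y|)^{-3/2-\epsilon}$ with $0<\epsilon<1/2$: then $q\in L^{2,1}$ but $yq\notin L^1$); the Cauchy--Schwarz pairing against $(1+y^2)^{-1/2}$ only yields $\parallel q\parallel_1\lesssim\parallel q\parallel_{2,1}$, and inserting the extra weight makes the companion factor $\int y^2(1+y^2)^{-1}\mathrm{d}y$ divergent. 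Consequently the portion of $F^\pm$ coming from the non-constant part $Y_1^\pm-\left(\begin{smallmatrix}1\\0\end{smallmatrix}\right)$ cannot be handled by Minkowski's inequality as $\parallel yq\parallel_1\,\parallel Y_1^\pm-\left(\begin{smallmatrix}1\\0\end{smallmatrix}\right)\parallel_\mathcal{A}$; only the contribution of the constant vector is an honest Fourier transform of the $L^2$ function $y\,q$ to which Plancherel applies. To close the estimate one must keep the weighted factor $y\,q$ in $L^2$ and pair it with the dual test function inside the oscillatory integral, which forces the explicit iterated-integral bookkeeping: this is exactly what the paper does in Proposition \ref{pa3}, splitting $\partial_z\bigl[(T^\pm)^n\left(\begin{smallmatrix}1\\0\end{smallmatrix}\right)\bigr]$ into the $2n-1$ terms $g_j$, placing the single weighted factor $y_j\,q$ in $L^2$ (yielding $\parallel q\parallel_{2,1}$) and the remaining unweighted factors in $L^1$ (yielding $\parallel q\parallel_1^{2n-2}/(n-1)!$). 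With that repair your fixed-point formulation would go through, but as written the step ``$F^\pm\in\mathcal{A}$ because $yq\in L^1$'' fails.
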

Before the proof of Lemma \ref{la2}, we introduce integral operators $T^\pm$ such that:
\begin{align*}
	(T^\pm f)(z,x)=\int_{\pm\infty}^{x}e^{iyz\sigma_3}Q(y)f(z,y)\mathrm{d}y.
\end{align*}
Then, it can be derived from (\ref{e53}) that
\begin{subequations}\label{e56}
	\begin{align}
		&Y^\pm_1(z,x)-\left(\begin{matrix}
			1\\0
		\end{matrix}\right)=\sum_{n=1}^\infty \left[(T^\pm)^n\left(\begin{matrix}
			1\\0
		\end{matrix}\right)\right](z,x),\\
		&\partial_zY^\pm_1(z,x)=\sum_{n=1}^\infty \left[(T^\pm)^n\left(\begin{matrix}
			1\\0
		\end{matrix}\right)\right]'_z(z,x).
	\end{align}
\end{subequations}
To obtain the result in Lemma \ref{la2}, we estimate $\mathcal{A}$-norm for each element of the summation appearing in the right hand of (\ref{e56}), which is shown in Proposition \ref{pa3s} and \ref{pa3}.
\begin{proposition}\label{pa3s}
	If $q(x)\in L^{2,1}(\mathbb{R})$, some ones obtain estimates for the $\mathcal{A}$-norm of $(T^\pm)^n\left(\begin{matrix}
		1\\0
	\end{matrix}\right)$:
	\begin{align*}
		&\Big\|(T^\pm)^{2n-1}\left(\begin{matrix}
			1\\0
		\end{matrix}\right)\Big\|_\mathcal{A}\le \sqrt{\pi}\parallel q\parallel_{2}\frac{\parallel q\parallel_{1}^{2n-2}}{(n-1)!},\\
		&\Big\|(T^\pm)^{2n}\left(\begin{matrix}
			1\\0
		\end{matrix}\right)\Big\|_\mathcal{A}\le \sqrt{\pi}\parallel q\parallel_{2}\frac{\parallel q\parallel_{1}^{2n-1}}{(n-1)!},
	\end{align*}
	where $n=2,\ 3,\dots$.
\end{proposition}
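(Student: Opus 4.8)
The plan is to realize $(T^\pm)^n e_1$, where $e_1=\left(\begin{smallmatrix}1\\0\end{smallmatrix}\right)$, as an iterated Volterra integral over an ordered simplex, to reserve the innermost integration for a Plancherel estimate (this is what produces the factor $\|q\|_2$ together with the $L^2$‑control in the spectral variable $z$), to strip off the remaining integrations with Minkowski's integral inequality, and finally to extract the factorial from the ordered simplex by keeping the reflected and the non‑reflected nodes apart. I will carry this out for $T^-$; the case $T^+$ is identical after reversing the integration rays (and an irrelevant overall sign). Write $N$ for the exponent, so $N=2n-1$ or $N=2n$; note that the hypothesis $q\in L^{2,1}(\mathbb{R})$ enters only through $\|q\|_1,\|q\|_2<\infty$.

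First, iterating the definition of $T^\pm$ and using that $Q$ is off‑diagonal while $e^{iyz\sigma_3}$ is diagonal gives
\begin{align*}
	\left[(T^-)^N e_1\right](z,x)=\int_{x>y_1>\cdots>y_N}\left(\prod_{j=1}^N e^{iy_jz\sigma_3}Q(y_j)\right)e_1\,\mathrm{d}y_1\cdots\mathrm{d}y_N .
\end{align*}
Multiplying the matrices from the right, one checks by induction that the surviving component is the second one when $N$ is odd and the first one when $N$ is even, that the scalar attached to the node $y_j$ is $q(y_j)\,e^{iy_jz}$ when $N-j$ is odd and $-\sigma\,\bar q(-y_j)\,e^{-iy_jz}$ when $N-j$ is even, and that each such scalar exponential has modulus one for $z\in\mathbb{R}$.

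Next I would freeze $y_1,\dots,y_{N-1}$ and look at the innermost integral
\begin{align*}
	g(z,y_{N-1})=\int_{-\infty}^{y_{N-1}} e^{\pm i y_N z}\,\kappa(y_N)\,\mathrm{d}y_N,\qquad \kappa(y_N)\in\{\,q(y_N),\ \bar q(-y_N)\,\}.
\end{align*}
Up to reflecting the integration variable this is the Fourier transform in $z$ of $q$ restricted to a half‑line, so Plancherel gives $\|g(\cdot,y_{N-1})\|_{2}\le\sqrt{\pi}\,\|q\|_2$ uniformly in $y_{N-1}$ (the half‑line truncation is what improves the naive $\sqrt{2\pi}$ to $\sqrt{\pi}$). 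Applying Minkowski's integral inequality $N-1$ times to pull the $L^2$‑in‑$z$ norm inside the integrations over $y_1,\dots,y_{N-1}$ and discarding the remaining modulus‑one kernels, I obtain
\begin{align*}
	\left\|\left[(T^-)^N e_1\right](\cdot,x)\right\|_2\le\sqrt{\pi}\,\|q\|_2\int_{x>y_1>\cdots>y_{N-1}}\ \prod_{j=1}^{N-1}|q(\pm y_j)|\,\mathrm{d}y_1\cdots\mathrm{d}y_{N-1},
\end{align*}
the sign in $|q(\pm y_j)|$ alternating along the chain according to the parity of $N-j$.

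The last integral is the heart of the matter, and it is also where the main obstacle lies. A crude bound fails: replacing $|q(\pm y_j)|$ by $|q(y_j)|+|q(-y_j)|$ and evaluating the simplex integral as $\big(2\|q\|_1\big)^{N-1}/(N-1)!$ already overshoots the claimed estimate for small $n$ (already at $n=2$), so one cannot symmetrize the reflected and unreflected arguments into a single function. Instead I would split the ordered indices $\{1,\dots,N-1\}$ into the set with $N-j$ odd — on which the factor is $|q(y_j)|$, the \emph{same} function for every such $j$ — and the set with $N-j$ even, on which it is $|q(-y_j)|$. For the second set I drop all chain inequalities involving its variables and integrate each over $\mathbb{R}$, at a cost of exactly $\|q\|_1$ per variable; the variables of the first set still form a decreasing chain, and since they all carry the one function $|q(\cdot)|$, the ordered integral over $k$ of them equals $\|q\|_1^{\,k}/k!$. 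Counting the two sets gives $k=n-1$ for $N=2n-1$ and $k=n$ for $N=2n$, so the simplex integral is at most $\|q\|_1^{2n-2}/(n-1)!$ and $\|q\|_1^{2n-1}/n!\le\|q\|_1^{2n-1}/(n-1)!$ respectively; inserting this into the previous display yields the two asserted bounds. The delicate part is precisely this allocation: the factorial must be spent on the half of the nodes carrying a common function, because spreading it over the whole simplex forces a factor $2^{N-1}$ that ruins the estimate for moderate $N$.
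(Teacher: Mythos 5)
Your strategy is essentially the paper's: both proofs spend the innermost integration on an $L^2$-in-$z$ estimate worth $\sqrt{\pi}\parallel q\parallel_2$ (you via Plancherel plus Minkowski, the paper via the dual representation $\parallel g\parallel_2=\sup_{\parallel h\parallel_2\le1}\int g\bar h$, the Fourier transform $\hat f$ and Cauchy--Schwarz in the innermost variable --- the same estimate in dual form), and both extract the factorial from the remaining ordered integral by separating the nodes carrying $|q(\cdot)|$ from those carrying $|q(-\cdot)|$; the paper's third displayed inequality is exactly this relaxation of the full chain to the two sub-chains, and your counting ($k=n-1$ ordered nodes for $N=2n-1$, $k=n$ for $N=2n$, the latter even giving the slightly sharper $1/n!$) is correct. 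Your remark that the crude symmetrization $(2\parallel q\parallel_1)^{N-1}/(N-1)!$ overshoots for small $n$ is accurate and motivates the split well.

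There is, however, one concrete error, and it concerns the constant. The operator that actually generates the series (\ref{e56}) is the conjugated one, $(T^\pm f)(z,x)=\int_{\pm\infty}^{x}e^{iyz\hat\sigma_3}Q(y)f(z,y)\,\mathrm{d}y$ (the un-hatted $\sigma_3$ in the text is a slip, consistent with (\ref{e53}) and with the explicit kernels (\ref{e57}), whose phases are $e^{\pm2izA}$), so the scalar attached to each node carries $e^{\pm 2iy_jz}$, not $e^{\pm iy_jz}$; it is precisely this factor $2$ that makes the innermost Plancherel estimate come out as $\sqrt{\pi}\parallel q\parallel_2$. With the single-$i$ phases you wrote, Plancherel gives $\sqrt{2\pi}\parallel q\parallel_2$, and your parenthetical justification --- that the half-line truncation improves $\sqrt{2\pi}$ to $\sqrt{\pi}$ --- is false: restricting the $y$-integral to a half-line only yields an inequality with the same constant (take $q$ supported on that half-line). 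As written, your argument therefore proves the stated bounds only with $\sqrt{2\pi}$ in place of $\sqrt{\pi}$; once the phases are corrected to $e^{\pm2iy_jz}$, the $\sqrt{\pi}$ drops out of Plancherel directly, the spurious half-line argument can be deleted, and the rest of your proof stands as is.
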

\begin{proposition}\label{pa3}
	If $q(x)\in L^{2,1}(\mathbb{R})$, some ones obtain estimates for the $\mathcal{A}$-norm of $\left[(T^\pm)^n\left(\begin{matrix}
		1\\0
	\end{matrix}\right)\right]'_z$:
	\begin{align*}
		&\Big\|\left[T^\pm\left(\begin{matrix}
			1\\0
		\end{matrix}\right)\right]'_z\Big\|_\mathcal{A}\le C\parallel q\parallel_{2,1},\\
		&\Big\|\left[(T^\pm)^2\left(\begin{matrix}
			1\\0
		\end{matrix}\right)\right]'_z\Big\|_\mathcal{A}\le C\parallel q\parallel_{2,1}\parallel q\parallel_1,\\
		&\Big\|\left[(T^\pm)^{2n-1}\left(\begin{matrix}
			1\\0
		\end{matrix}\right)\right]'_z\Big\|_\mathcal{A}\le C\parallel q\parallel_{2,1}\frac{\parallel q\parallel_{1}^{2n-2}}{(n-2)!},\\
		&\Big\|\left[(T^\pm)^{2n}\left(\begin{matrix}
			1\\0
		\end{matrix}\right)\right]'_z\Big\|_\mathcal{A}\le C\parallel q\parallel_{2,1}\frac{\parallel q\parallel_{1}^{2n-1}}{(n-2)!},
	\end{align*}
	where $n=2,3,\dots$ and $C$ is some fixed positive number.
\end{proposition}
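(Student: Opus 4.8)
The goal is to establish the four bounds in Proposition \ref{pa3} on the $\mathcal{A}$-norms of the $z$-derivatives of the Neumann iterates $(T^\pm)^n\binom{1}{0}$. The plan is to imitate the structure already set up for Proposition \ref{pa3s}, but to track the extra factor of $y$ (or of $y-x$) produced whenever $\partial_z$ hits one of the oscillatory exponentials $e^{iyz\hat\sigma_3}$ inside the nested integrals, and to spend exactly one such factor against the weight present in $\|q\|_{2,1}$.

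First I would write $(T^\pm)^n\binom{1}{0}$ explicitly as an $n$-fold iterated integral over the simplex $\pm\infty > y_1 > y_2 > \cdots > y_n$ (with the inequalities oriented appropriately for the $+$ and $-$ cases and $y_0 = x$), with integrand a product $\prod_{j=1}^n e^{i y_j (\text{something}) z} Q(y_j)$ acting on $\binom{1}{0}$; because $Q$ is off-diagonal, only the even/odd iterates survive in each component, which is why the estimates split into the $2n-1$ and $2n$ cases exactly as in Proposition \ref{pa3s}. Applying $\partial_z$ by the Leibniz rule produces a sum of $n$ terms, in the $k$-th of which the exponential at level $k$ acquires a polynomial prefactor linear in $y_k$; all other levels are untouched. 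For the term where the derivative lands on level $k=1$ I would bound $|y_1|\,|q(y_1)|$ by its $L^2$-norm $\le \|q\|_{2,1}$ via Cauchy--Schwarz in $y_1$ (the oscillation and the unimodular matrix structure contribute nothing to the modulus, exactly as in the proof of the preceding propositions), and then estimate the remaining $n-1$ levels by the $L^1$-norm $\|q\|_1$ just as before, paying attention that one $q$-factor has already been "used up". The combinatorial/factorial bookkeeping: in Proposition \ref{pa3s} the bound carried $\|q\|_1^{m}/(\lfloor m/2\rfloor-1)!$-type factorials coming from integrating over the ordered simplex; here, because one integration variable is weighted and therefore controlled by a different (non-nested) estimate, the simplex over which the factorial gain is realized has one fewer dimension, which is precisely why the denominators degrade from $(n-1)!$ to $(n-2)!$.

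For the terms where $\partial_z$ lands on level $k\ge 2$, I would handle the weight $|y_k|$ by the telescoping trick $|y_k| \le |y_k - y_{k-1}| + |y_{k-1}| \le \cdots \le |x| + \sum_{j} |y_{j-1}-y_j|$, or more cleanly just bound $|y_k|\le |y_k|$ and again peel it off against $\|q\|_{2,1}$ at that level while estimating the rest in $L^1$; the outermost variable being $x$ is not a problem because we only need an $L^\infty_x$ bound and the leftover integral $\int_{\pm\infty}^x$ of an $L^1$ function is uniformly bounded. An alternative, perhaps cleaner, route is to note $\partial_z Y_1^\pm$ itself satisfies a Volterra equation obtained by differentiating (\ref{e53}): $\partial_z Y_1^\pm = (T^\pm \partial_z Y_1^\pm) + R^\pm$ with inhomogeneity $R^\pm(z,x) = \int_{\pm\infty}^x \partial_z\big(e^{iyz\hat\sigma_3}\big) Q(y) Y_1^\pm(z,y)\,dy$, so that $\partial_z Y_1^\pm = \sum_{n\ge 0}(T^\pm)^n R^\pm$; one then estimates $R^\pm$ in $\mathcal{A}$ using the weight (this is where $\|q\|_{2,1}$ and the already-known bound on $Y_1^\pm$ from Lemma \ref{la2}'s first claim enter) and reuses verbatim the operator-norm/simplex estimates on $(T^\pm)^n$ from Proposition \ref{pa3s}. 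I expect the main obstacle to be purely bookkeeping rather than conceptual: getting the exact factorial denominators $1/(n-2)!$ right — in particular making sure the one weighted level is not itself part of the ordered chain that produces the factorial gain, and checking the small cases $n=1,2$ separately (which is why the proposition lists them separately) where no factorial decay is available and one simply gets $C\|q\|_{2,1}$ and $C\|q\|_{2,1}\|q\|_1$. Summing the resulting series then converges (it is dominated by $\|q\|_{2,1}\, e^{\|q\|_1^2}$ up to constants), which is what makes the series in (\ref{e56}) converge in $\mathcal{A}$ and hence proves Lemma \ref{la2}.
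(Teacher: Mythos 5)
Your primary argument is essentially the paper's own proof: the paper likewise differentiates the explicit nested-integral representation, splits $\left[(T^\pm)^{2n-1}\binom{1}{0}\right]'_z$ into the $2n-1$ terms $g_j$ carrying a single weighted factor $y_j$, pushes that weighted factor through the duality/Fourier--Cauchy--Schwarz step of Proposition \ref{pa3s} to produce $\parallel q\parallel_{2,1}$ while the remaining factors are estimated in $L^1$ over the ordered simplices, and then sums the terms, which is exactly where the degradation to $1/(n-2)!$ comes from. Your alternative route (a Volterra equation for $\partial_z Y^\pm_1$ with inhomogeneity $R^\pm$) is only sketched and would still need the same Plancherel-type pairing to put $R^\pm$ in $\mathcal{A}$, so the main line of your proposal coincides with the paper's.
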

\begin{proof}[proof of Proposition \ref{pa3s}]
	In functional analysis, there is an important fact that for any function $g\in L^2(\mathbb{R})$, the $L^2$-norm of $g$ can be written as
	\begin{align}\label{e57s}
		\parallel g\parallel_2=\sup_{h\in L^2}\int_\mathbb{R}g(s)\overline{h(s)}\mathrm{d}s,
	\end{align}
	which is very useful in our proof. Without loss of generality, we only check the result for $T^-$ in detail, and remaining results is similarly obtained. By definition of $T^-$, we have
	\begin{subequations}\label{e57}
		\begin{align}
			&(T^-)^{2n-1}\left(\begin{matrix}
				1\\0
			\end{matrix}\right)(z,x)=\int_{-\infty}^{x}\int_{-\infty}^{y_1}\dots\int_{-\infty}^{y_{2n-2}}\left(\begin{matrix}
				0\\K_{2n-1}
			\end{matrix}\right)\mathrm{d}y_{2n-1}\dots\mathrm{d}y_1,\label{e57a}\\
			&\quad K_{2n-1}=\prod_{k=1}^{n}(-\bar q(-y_{2k-1}))\prod_{k=1}^{n-1}q(y_{2k})e^{2izA_{2n-1}},\notag\\
			&(T^-)^{2n}\left(\begin{matrix}
				1\\0
			\end{matrix}\right)(z,x)=\int_{-\infty}^{x}\int_{-\infty}^{y_1}\dots\int_{-\infty}^{y_{2n-1}}\left(\begin{matrix}
				K_{2n}\\0
			\end{matrix}\right)\mathrm{d}y_{2n}\dots\mathrm{d}y_1,\\
			&\quad K_{2n}=\prod_{k=1}^{n}(-\bar q(-y_{2k-1}))\prod_{k=1}^{n}q(y_{2k})e^{-2izA_{2n}},\notag
		\end{align}
	\end{subequations}
	where
	\begin{align*}
		A_{n}=\sum_{k=1}^{n}(-1)^ky_k,\quad
		n=1,2,\dots.
	\end{align*}
	By (\ref{e57s}), Schwartz inequality and Fourier transformation, we have
	\begin{align*}
		\Big\| (T^-)^{2n-1}\left(\begin{matrix}
			1\\0
		\end{matrix}\right)\Big\|_\mathcal{A}=&\sqrt{\pi}\sup_{x\in\mathbb{R},\parallel f\parallel_2\le1}\int_{-\infty}^{x}\int_{-\infty}^{y_1}\dots\int_{-\infty}^{y_{2n-2}}\prod_{k=1}^{n-1}\left(\bar q(-y_{2k-1})q(y_{2k})\right)\times\\
		&(-1)^n(\bar q(-y_{2n-1}))\hat f\left(-A_{2n-1}\right)\mathrm{d}y_{2n-1}\dots\mathrm{d}y_1\\
		\le&\sqrt{\pi} \sup_{x\in\mathbb{R},\parallel f\parallel_2\le1}\int_{-\infty}^{x}\dots\int_{-\infty}^{y_{2n-3}}\prod_{k=1}^{n-1}|\bar q(-y_{2k-2})q(y_{2k})|\times\\
		&\parallel q\parallel_{2}\parallel \hat f\parallel_2\mathrm{d}y_{2n-2}\dots\mathrm{d}y_1\\
		\le&\sqrt{\pi}\parallel q\parallel_{2}\sup_{x\in\mathbb{R}}\int_{-\infty}^{x}\dots\int_{-\infty}^{y_{n-2}}\prod_{k=1}^{n-1}|q(-y_{k})|\mathrm{d}y_{n-1}\dots\mathrm{d}y_1\times\\
		&\int_{-\infty}^{x}\dots\int_{-\infty}^{y_{n-2}}\prod_{k=1}^{n-1}|q(y_{k})|\mathrm{d}y_{n-1}\dots\mathrm{d}y_1\\
		\le&\sqrt{\pi}\parallel q\parallel_{2}\frac{\parallel q\parallel_1^{2n-2}}{(n-1)!},
	\end{align*}
	where $\hat f$ is the Fourier transformation of $f$:
	\begin{align*}
		\hat f(\zeta)=\pi^{-\frac{1}{2}}\int_{\mathbb{R}}f(z)e^{-2iz\zeta}\mathrm{d}z.
	\end{align*}
	The result for $(T^-)^{2n}\left(\begin{matrix}
		1\\0
	\end{matrix}\right)$ can be obtained similarly, and then the proposition is confirmed.
\end{proof}

\begin{proof}[proof of Proposition \ref{pa3}]
	Without loss of generality, we also only give the proof of this proposition for $T^-$.
    Taking the derivative in (\ref{e57a}), it follows that
		\begin{align}
			&\left[(T^-)^{2n-1}\left(\begin{matrix}
				1\\0
			\end{matrix}\right)\right]'_z(z,x)=\int_{-\infty}^{x}\int_{-\infty}^{y_1}\dots\int_{-\infty}^{y_{2n-2}}\left(\begin{matrix}
				0\\K_{2n-1}'
			\end{matrix}\right)\mathrm{d}y_{2n-1}\dots\mathrm{d}y_1,\\
			&\quad K_{2n-1}'=2iA_{2n-1}\left(\prod_{k=1}^{n}(-\bar q(-y_{2k-1}))\right)\left(\prod_{k=1}^{n-1}q(y_{2k})\right)e^{2izA_{2n-1}},\notag
		\end{align}
	where $n=1,2,\dots$.
	We split $\left[(T^-)^{2n-1}\left(\begin{matrix}
		1\\0
	\end{matrix}\right)\right]'_z$ into $2n-1$ terms:
	\begin{subequations}
		\begin{align}
			&\left[(T^-)^{2n-1}\left(\begin{matrix}
				1\\0
			\end{matrix}\right)\right]'_z=\sum_{j=1}^{2n-1}\left(\begin{matrix}
				0\\g_j
			\end{matrix}\right),\label{e60a}\\
			&g_j(z,x)=\int_{-\infty}^{x}\int_{-\infty}^{y_1}\dots\int_{-\infty}^{y_{2n-2}}2i(-1)^jy_j\left(\prod_{k=1}^{n}(-\bar q(-y_{2k-1}))\right)\times\notag\\
			&\quad\quad\left(\prod_{k=1}^{n-1}q(y_{2k})\right)e^{2izA_{2n-1}}\mathrm{d}y_{2n-1}\dots\mathrm{d}y_1,\quad j=1,\dots,2n-1,
		\end{align}
	\end{subequations}
	and $\parallel g_j\parallel_\mathcal{A}$ is bounded by $\parallel q\parallel_{2,1}$ and $\parallel q\parallel_1$:
	\begin{align*}
		\parallel g_j\parallel_\mathcal{A}\le \sqrt{\pi}\parallel q\parallel_{2,1}\frac{\parallel q\parallel_1^{2n-2}}{(n-1)!},
	\end{align*}
	which can be verified by strictly applying the technique to bound $\Big\| (T^-)^{2n-1}\left(\begin{matrix}
		1\\0
	\end{matrix}\right)\Big\|_\mathcal{A}$ in Proposition \ref{pa3s}. Then, the result for $\Big\|\left[(T^-)^{2n-1}\left(\begin{matrix}
	1\\0
\end{matrix}\right)\right]'_z\Big\|_\mathcal{A}$ immediately follows. The result for $\Big\|\left[(T^-)^{2n}\left(\begin{matrix}
1\\0
\end{matrix}\right)\right]'_z\Big\|_\mathcal{A}$ is obtained similarly.
\end{proof}
According to control convergence theorem and the fact that
\begin{align*}
	\parallel q\parallel_1\le \parallel q\parallel_{2,1}\left(\int_{\mathbb{R}}(1+x^2)^{-1}\mathrm{d}x\right)^\frac{1}{2}\le\sqrt{\pi}\parallel q\parallel_{2,1},
\end{align*}
we conclude from Proposition \ref{pa3s} and \ref{pa3} that $Y^\pm_1-\left(\begin{matrix}
	1\\0
\end{matrix}\right)$ and $\partial_zY^\pm_1$ exist in $\mathcal{A}$, and the $\mathcal{A}$-norm of them satisfy
\begin{align*}
	&\Big\| Y^\pm_1-\left(\begin{matrix}
		1\\0
	\end{matrix}\right)\Big\|_\mathcal{A}\le\sqrt{\pi}\parallel q\parallel_2(1+\parallel q\parallel_1)e^{\parallel q\parallel_1^2}
    =\sqrt{\pi}\parallel q\parallel_2(1+\sqrt{\pi}\parallel q\parallel_{2,1})e^{\pi\parallel q\parallel_{2,1}^2},\\
	&\parallel\partial_zY^\pm_1\parallel_\mathcal{A}\le C\parallel q\parallel_{2,1}\parallel q\parallel_1e^{\parallel q\parallel_1^2}(2+\parallel q\parallel_1+\parallel q\parallel_1^2)\\
	&\quad\quad\quad\quad\quad\lesssim\parallel q\parallel_{2,1}^2(2+\sqrt{\pi}\parallel q\parallel_{2,1}+\pi\parallel q\parallel_{2,1}^2)e^{\pi\parallel q\parallel_{2,1}^2}.
\end{align*}
Then, the result of Lemma \ref{la2} is verified.

\subsection{ $\breve{r}(z)\in H^1(\mathbb{R})$}
\indent

Because of the assumption that $q(x)$ is generic and the fact that for $z\in\mathbb{R}$, by (\ref{e13b}),
\begin{align*}
	\breve{r}(z)=\frac{\breve{b}(z)}{\breve{a}(z)}=-\sigma\frac{\overline{b(-z)}}{\breve{a}(z)},
\end{align*}
$\breve{r}(z)\in L^2(\mathbb{R})$ follows immediately after that $b(z)\in L^2(\mathbb{R})$. Also, for $\breve{r}'(z)$, we have
\begin{align*}
	\breve{r}'(z)=\sigma\frac{\overline{b'(-z)}}{\breve{a}(z)}+\frac{\breve{a}'(z)\breve{b}(z)}{\breve{a}^2(z)},
\end{align*}
then, because of that $\breve a(z)$ and $\breve b(z)$ are bounded on $\mathbb{R}$, $b'(z)\in L^2(\mathbb{R})$ and that $q(x)$ is generic, $\breve{r}'(z)\in L^2(\mathbb{R})$ only if $\breve{a}'(z)\in L^2(\mathbb{R})$. By (\ref{e7b}), (\ref{e16r}) and (\ref{e24}), we have
\begin{align}\label{e62}
	\breve{a}'(z)=&\partial_zY_{1,1}^+(z,x)Y_{1,1}^+(-z,-x)+\partial_zY_{2,1}^+(z,x)Y_{2,1}^+(-z,-x)\notag\\
	&-Y_{1,1}^+(z,x)\partial_zY_{1,1}^+(-z,-x)-Y_{2,1}^+(z,x)\partial_zY_{2,1}^+(-z,-x).
\end{align}
Therefore, $\breve{a}'(z)\in L^2(\mathbb{R})$ is the consequence of (\ref{e62}), the boundedness of $Y^\pm$ and Lemma \ref{la2}. To sum up, we have completed the proof of $\breve{r}(z)\in H^1(\mathbb{R})$.

\section{Deformations for RH problem}\label{s4}
\indent

In this section, we deform $M$ several time such that the final RH problem satisfies a model RH problem. 
From $\varphi =i(zx/t+2z^2 )$,  we can get  stationary phase point $ \xi=-\frac{x}{4t}$, which satisfies 
\begin{align*}
	\partial_z \varphi(\xi)=0,\quad \partial^2_z \varphi(\xi)\ne0.
\end{align*}

\begin{figure}[h]
	\centering{\includegraphics[width=0.4\linewidth]{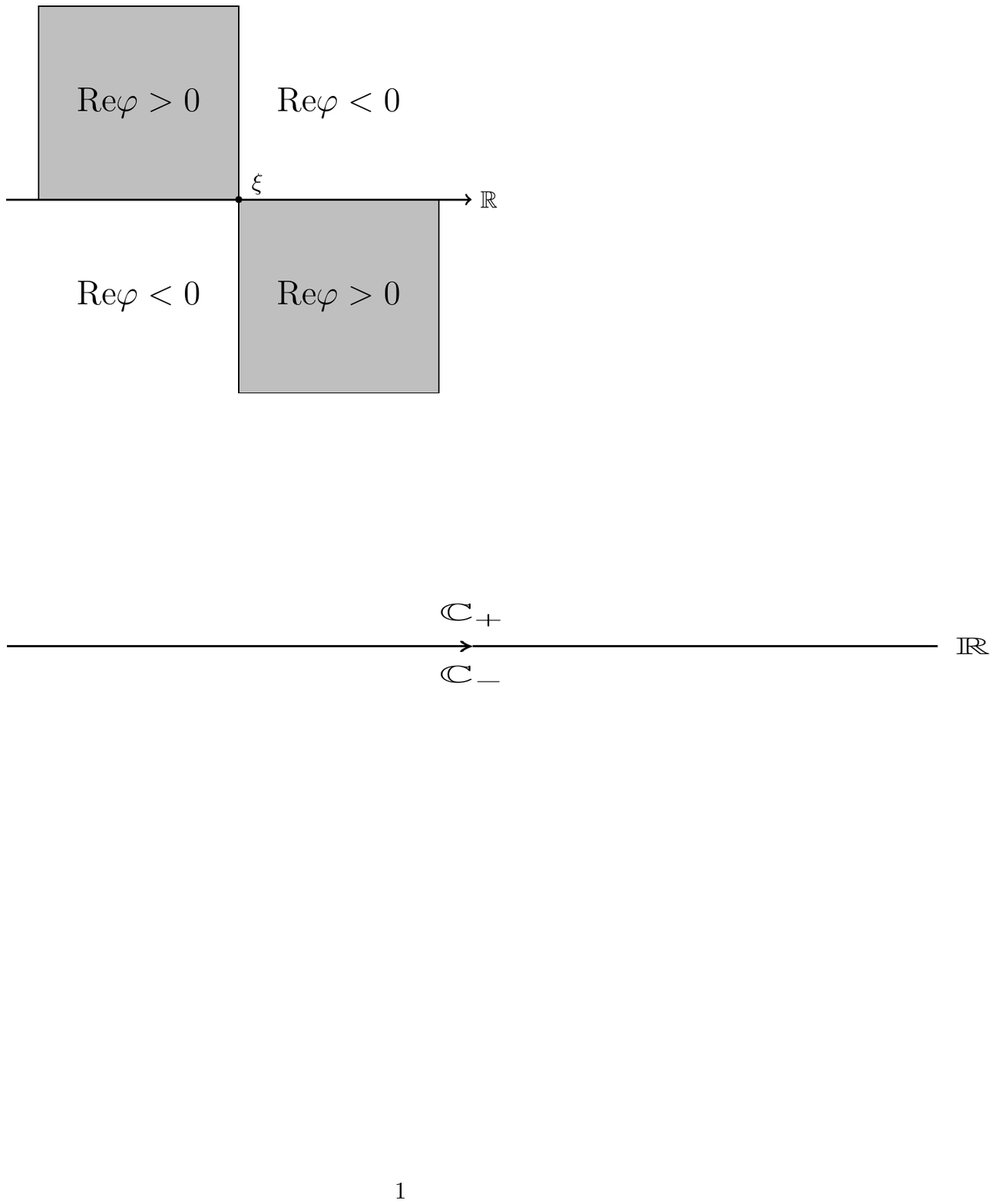}}
	\caption{\label{fig2}The signature table for ${\rm Re}t\varphi$ and $\xi$ the stationary phase point}
\end{figure}

\subsection{The first RH problem transformation}
\indent

Introducing $\delta$:
\begin{align}\label{e18s}
	\delta\equiv \delta(z)=e^{i\int_{-\infty}^{\xi}\frac{ i\nu(s)}{s-z}ds},\quad \nu(s)=-\frac{\log(1-r(s)\breve{r}(s))}{2\pi},
\end{align}
and assuming that $|\arg(1-r(s)\breve r(s))|<\pi$ for $s\in\mathbb{R}$ to secure that $\log(1-r(s)\breve r(s))$ is single-valued,
since $\{r(z),\breve r(z)\}\subset H^1(\mathbb{R})$, we find it possessing properties listed in proposition \ref{p4.1}.
\begin{proposition}\label{p4.1}
	Function $\delta$ admits the following properties:
	\begin{enumerate}
		\item $\delta$ is analytic and non-zero on $\mathbb{C}\setminus(-\infty,\xi]$.
		\item $\delta(z)$ and $\delta(z)^{-1}$ is bounded on $\mathbb{C}\setminus(-\infty,\xi]$.
		\item On $(-\infty, \xi]$, $\delta$ satisfies the jump condition:
		\begin{align*}
			\delta_+=(1-r\breve{r})\delta_-.
		\end{align*}
	\item For any positive number $c<\pi$, as $z\to\infty$ and $|\arg (z-\xi)|\leq c$,
	\begin{align}\label{e18}
		\delta(z)\sim 1-\frac{i}{z}\int_{-\infty}^{\xi}\nu(s)\mathrm{d}s +\mathcal{O}(z^{-2}).
	\end{align}
\item If we write
\begin{align*}
	&\delta(z)=e^{i\beta(z,\xi)}(z-\xi)^{i\nu(\xi)},\\
	&\beta(z,\xi)=\int_{-\infty}^{\xi}\frac{\nu(s)-\chi(x)\nu(\xi)}{z-s}\mathrm{d}s-\nu(\xi)\log(z-\xi+1),
\end{align*}
where $\chi$ is the characterized function of the interval: $[\xi-1,\xi]$, then, at the neighborhood of $z=\xi$, $\beta(z,\xi)$ possesses the asymptotic property:
\begin{align*}
	|\beta(z,\xi)-\beta(\xi,\xi)|\le C(\parallel r\parallel_{H^1}+\parallel r\parallel_{H^1})|z-\xi|^{\frac{1}{2}},\quad z\to\xi.
\end{align*}
	\end{enumerate}
\end{proposition}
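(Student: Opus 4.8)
The plan is to verify each of the five properties of $\delta$ directly from its integral representation \eqref{e18s}, treating this as essentially a Cauchy-integral computation against the weight $\nu$. First I would note that since $\{r,\breve r\}\subset H^1(\mathbb{R})$, estimate \eqref{e11s} gives $|r\breve r|\lesssim(1+z^2)^{-1/2}$, so $1-r\breve r$ is bounded, continuous, and (by the generic assumption) non-vanishing on $\mathbb{R}$; together with the single-valuedness hypothesis on $\arg(1-r\breve r)$ this makes $\log(1-r\breve r)$, hence $\nu$, a well-defined bounded function in $H^1(\mathbb{R})$, and in particular $\nu\in L^1((-\infty,\xi])$ near... wait, I should be careful: $\nu$ is bounded and $O((1+s^2)^{-1/2})$ so it is integrable on $(-\infty,\xi]$. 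This integrability is what makes the Cauchy-type integral $\int_{-\infty}^\xi \frac{i\nu(s)}{s-z}\,ds$ converge for $z\notin(-\infty,\xi]$ and depend analytically on $z$ there; differentiating under the integral sign gives analyticity, which is property (1), and non-vanishing of $\delta$ is automatic since it is an exponential. For property (2), I would write $|\delta(z)| = \exp\bigl(-\operatorname{Re}\int_{-\infty}^\xi \frac{i\nu(s)}{s-z}\,ds\bigr)$ and compute $\operatorname{Re}\frac{i}{s-z} = \frac{\operatorname{Im} z}{|s-z|^2}$, so $|\delta(z)|=\exp\bigl(-\int_{-\infty}^\xi \nu(s)\frac{\operatorname{Im} z}{|s-z|^2}\,ds\bigr)$; since $\nu$ is real and bounded, this is bounded above and below by the Poisson-integral bound $|\int \nu(s)\frac{\operatorname{Im} z}{|s-z|^2}\,ds|\le \pi\|\nu\|_\infty$, giving uniform bounds on $\delta^{\pm1}$ off the cut.

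For property (3), the jump across $(-\infty,\xi]$ follows from Plemelj's formula applied to the Cauchy integral: the boundary values from above and below differ by $2\pi i$ times the density, i.e. $\int_{-\infty}^\xi\frac{i\nu(s)}{s-z}\,ds$ jumps by $2\pi i \cdot i\nu(z) = -2\pi\nu(z)$ as $z$ crosses the cut at a point where $\nu$ is, say, Hölder (which holds since $\nu\in H^1$ embeds in $C^{1/2}$), hence $\delta_+/\delta_- = e^{i(\text{jump})} = e^{-2\pi i\nu(z)} \cdot(\text{sign bookkeeping})= 1-r\breve r$ using the definition of $\nu$. I would spell out the orientation so that the exponent matches $\log(1-r\breve r)$ with the correct sign. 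Property (4) is the large-$z$ expansion: for $|\arg(z-\xi)|\le c<\pi$, expand $\frac{1}{s-z} = -\frac1z - \frac{s}{z^2} - \cdots$ inside the integral; the leading term gives $\int_{-\infty}^\xi\frac{i\nu(s)}{s-z}\,ds = -\frac iz\int_{-\infty}^\xi\nu(s)\,ds + O(z^{-2})$ (the $O(z^{-2})$ controlled because $\int |s|\,\nu(s)\,ds<\infty$ by the decay of $\nu$ — here I may need $\nu\in L^{2,1}$ or at least $s\nu(s)\in L^1$, which follows from $|\nu|\lesssim(1+s^2)^{-1/2}$ giving $s\nu\sim$ bounded, not integrable; so more care is needed, perhaps only $O(z^{-1})$ with the stated leading term and a $o(z^{-1})$ or $O(z^{-2+\epsilon})$ remainder — I would check whether $H^1$ suffices or cite \cite{zhou1998l2} for the sharper $L^{2,1}$ statement from Remark \ref{r2.2}), then exponentiating yields \eqref{e18}.

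The real work is property (5), the local behavior near $z=\xi$. The plan is to split off the singular factor: write $\int_{-\infty}^\xi\frac{i\nu(s)}{s-z}\,ds$ by adding and subtracting $\chi(s)\nu(\xi)$, so that $\int_{-\infty}^\xi \frac{i\chi(s)\nu(\xi)}{s-z}\,ds = i\nu(\xi)\int_{\xi-1}^\xi\frac{ds}{s-z} = i\nu(\xi)\log\frac{\xi-z}{\xi-1-z}$, which produces the $(z-\xi)^{i\nu(\xi)}$ factor and the $\log(z-\xi+1)$ correction after rearranging branches — this identifies $\beta(z,\xi)$ as claimed. The remaining integral $\int_{-\infty}^\xi\frac{i(\nu(s)-\chi(s)\nu(\xi))}{s-z}\,ds$ has a numerator that vanishes like $|s-\xi|^{1/2}$ as $s\to\xi$ by the $H^1\hookrightarrow C^{1/2}$ embedding with $\|\nu(\cdot)-\nu(\xi)\|$ controlled by $\|\nu\|_{H^1}\lesssim \|r\|_{H^1}+\|\breve r\|_{H^1}$; then the standard estimate for Cauchy integrals with Hölder density — e.g. the bound $|\Phi(z)-\Phi(\xi)|\le C\|g\|_{C^{1/2}}|z-\xi|^{1/2}$ for $\Phi(z)=\int\frac{g(s)-g(\xi)}{s-z}\,ds$ with $g$ vanishing at $\xi$ — gives exactly $|\beta(z,\xi)-\beta(\xi,\xi)|\le C(\|r\|_{H^1}+\|\breve r\|_{H^1})|z-\xi|^{1/2}$. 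I expect this Hölder-estimate-for-the-regular-part step to be the main obstacle: one must handle the non-compact tail $(-\infty,\xi-1)$ (where the density decays but is not compactly supported, so uniform $L^\infty$ or weighted bounds on $\nu$ are needed) separately from the local piece near $\xi$, and track that all constants reduce to the $H^1$ norms of the reflection coefficients; the tail contribution is actually smooth near $\xi$ and contributes an $O(|z-\xi|)$ term, so it is dominated by the $|z-\xi|^{1/2}$ bound, but making this rigorous requires splitting the contour and estimating each piece with the appropriate norm.
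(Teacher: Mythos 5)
The central gap is in your argument for property 2. You assume $\nu$ is real and bound $\log|\delta|$ by a Poisson integral, $|\int\nu(s)\frac{\operatorname{Im}z}{|s-z|^2}\,ds|\le\pi\|\nu\|_\infty$. But for the \emph{nonlocal} equation $\breve r\neq\bar r$, so $1-r\breve r$ is genuinely complex-valued and $\nu=-\frac{1}{2\pi}\log(1-r\breve r)$ has a nontrivial imaginary part: this is exactly why the paper must assume $|\arg(1-r\breve r)|<\pi$, and why $\operatorname{Im}\nu(\xi)$ appears in the final decay rate $t^{-\operatorname{Im}\nu-\frac12}$ and in the condition $\operatorname{Im}\nu(\xi)<\frac14$. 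With complex $\nu$, $\log|\delta|$ is not a Poisson integral of a bounded real density: the real part of $\log(1-r\breve r)$ couples to the Poisson kernel (harmless), but the imaginary part couples to the conjugate-Poisson kernel, which applied to a bounded density that does not vanish at the endpoint $\xi$ produces a logarithm. Indeed, your own factorization in property 5 shows the issue: $|\delta(z)|\approx|(z-\xi)^{i\nu(\xi)}|=|z-\xi|^{-\operatorname{Im}\nu(\xi)}e^{-\operatorname{Re}\nu(\xi)\arg(z-\xi)}$ near $z=\xi$, so when $\operatorname{Im}\nu(\xi)\neq0$ either $\delta$ or $\delta^{-1}$ blows up as $z\to\xi$, and no sup-norm Poisson bound can give uniform boundedness on all of $\mathbb{C}\setminus(-\infty,\xi]$. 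Your step would therefore fail as written; any honest proof of property 2 must either exploit a restriction on $\operatorname{Im}\nu$, exclude a neighborhood of $\xi$, or argue differently (and careful sign/normalization bookkeeping in (\ref{e18s}) is needed anyway — taken literally, the printed formula with the outer factor $i$ would give a jump $(1-r\breve r)^{i}$ rather than $1-r\breve r$, so the kernel you end up with depends on fixing that normalization first).

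The remaining items are fine and broadly parallel to the paper. For property 5 you take a mildly different route: the paper writes the regular part as a Cauchy transform, differentiates, and uses the fundamental theorem of calculus plus Cauchy--Schwarz with the $L^2$-boundedness of the Cauchy operator applied to $\nu'$, whereas you invoke $H^1(\mathbb{R})\hookrightarrow C^{1/2}$ and the Muskhelishvili-type endpoint estimate for Cauchy integrals whose density vanishes like $|s-\xi|^{1/2}$ at $\xi$; both yield $|\beta(z,\xi)-\beta(\xi,\xi)|\lesssim(\|r\|_{H^1}+\|\breve r\|_{H^1})|z-\xi|^{1/2}$, with the same tail/local splitting at $\xi-1$. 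Your hesitation about the $\mathcal{O}(z^{-2})$ remainder in property 4 is legitimate — with only $r,\breve r\in H^1$ one controls the $z^{-1}$ coefficient but not obviously the stated error — but the paper's one-line expansion is no more careful, and only the $z^{-1}$ coefficient is used downstream, so that is a shared (minor) imprecision rather than a defect specific to your argument.
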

\begin{proof}
	Property 1. 2. and 3. is trivial seeing from the definition (\ref{e18s}). For property 4., by taking the Laurent expansion of $(1-s/z)^{1/2}$ at $z\to\infty$, we derive that
	\begin{align*}
		\delta(z)=e^{-\frac{i}{z}\int_{-\infty}^{\xi}\nu(s)\mathrm{d}s+\mathcal{O}(z^{-2})}=I-\frac{i}{z}\int_{-\infty}^{\xi}\nu(s)\mathrm{d}s+\mathcal{O}(z^{-2}).
	\end{align*}
    Finally, we come to property 5., and split $\beta(z,\xi)-\beta(\xi,\xi)$ into three parts:
    \begin{align*}
    	\beta(z,\xi)-\beta(\xi,\xi)=I_1+I_2+I_3,
    \end{align*}
where
    \begin{align*}
    	&I_1=\nu(\xi)\log(z-\xi+1),\quad I_2=\int_{-\infty}^{\xi-1}\frac{\nu(s)}{s-z}\mathrm{d}s-\int_{-\infty}^{\xi-1}\frac{\nu(s)}{s-\xi}\mathrm{d}s,\\
    	&I_3=\int_{\xi-1}^{\xi}\frac{\nu(s)-\nu(\xi)}{s-z}\mathrm{d}s-\int_{\xi-1}^{\xi}\frac{\nu(s)-\nu(\xi)}{s-\xi}\mathrm{d}s.
    \end{align*}
    For $I_1$, we have
    \begin{align*}
    	\log(z-\xi+1)=(z-\xi)+\mathcal{O}((z-\xi)^{2}).
    \end{align*}
    For $I_2$, recalling that both $r(z)$ and $\breve{r}(z)$ are continuous and bounded on the real line, we have
    \begin{align*}
    	 |I_2|&=|2\pi(\mathcal{C}\nu(z)-\mathcal{C}\nu(\xi))|=|2\pi\int_{\xi}^{z}\left(\frac{\mathrm{d}}{\mathrm{d}s}\mathcal{C}\nu\right)(s)\mathrm{d}s|=2\pi|\int_{\xi}^{z}\mathcal{C}\nu'(s)\mathrm{d}s|\\
    	&\le 2\pi\parallel \mathcal{C}\nu'\parallel_2|z-\xi|^\frac{1}{2}\lesssim(\parallel r\parallel_{H^1}+\parallel \breve{r}\parallel_{H^1})|z-\xi|^\frac{1}{2},
    \end{align*}
    where $\mathcal{C}$ is the Cauchy integral operator that is bounded from $L^2$ to $L^2$ on interval $(-\infty,\xi-1)$:
    \begin{align*}
    	\mathcal{C}f(z)=\frac{1}{2\pi i}\int_{-\infty}^{\xi-1}\frac{f(s)}{s-z}\mathrm{d}s.
    \end{align*}
    Refer to Chapter 7 in \cite{Ablowitz2003complex} for more information about Cauchy integral operators.
    For $I_3$, recalling that $r(z)$ is $\frac{1}{2}$-H\"older continuous on the real line, we apply the Cauchy integral operator on the interval $[\xi-1,\xi]$ and have similar estimate for $I_3$:
    \begin{align*}
    	|I_3|\lesssim(\parallel r\parallel_{H^1}+\parallel\breve{r}\parallel_{H^1})|z-\xi|^\frac{1}{2}.
    \end{align*}
Finally, we complete the proof.
\end{proof}

Defining a $2\times2$ matrix function $M^{(1)}=M\delta^{-\sigma_3}$, observing RH problem \ref{rh3.1} and Proposition \ref{p4.1}, we obtain that $M^{(1)}$ solve the following RH problem.
\begin{rhp}\label{rh5.2}
	Find a $2\times2$ matrix function on $\mathbb{C}\setminus\mathbb{R}$, such that:
	\begin{itemize}
		\item Analyticity: $M^{(1)}$ is holomorphic on $\mathbb{C}\setminus\mathbb{R}$.
		\item Normalization:
		\begin{align*}
			M^{(1)}\sim I+\mathcal{O}(z^{-1}),\quad\text{as}\quad z\to\infty.
		\end{align*}
		\item Jump condition:
		\begin{align*}
			M^{(1)}_+=M^{(1)}_-e^{t\varphi\hat\sigma_3}V^{(1)},\quad\text{on}\quad \mathbb{R},
		\end{align*}
		where
		\begin{align*}
			V^{(1)}=\begin{cases}
				\left(\begin{matrix}
					1-r\breve{r}&-\breve{r}\delta^{2}\\r\delta^{-2}&1
				\end{matrix}\right)=
				\left(\begin{matrix}
					1&-\breve{r}\delta^{2}\\0&1
				\end{matrix}\right)
				\left(\begin{matrix}
					1&0\\r\delta^{-2}&1
				\end{matrix}\right)\quad\text{on}\quad (\xi,+\infty),\\
				\left(\begin{matrix}
					1&\frac{-\breve{r}\delta_+^{2}}{1-r\breve{r}}\\\frac{r\delta_-^{-2}}{1-r\breve{r}}&1-r\breve{r}
				\end{matrix}\right)=
				\left(\begin{matrix}
					1&0\\\frac{r\delta_-^{-2}}{1-r\breve{r}}&1
				\end{matrix}\right)
				\left(\begin{matrix}
					1&\frac{-\breve{r}\delta_+^{2}}{1-r\breve{r}}\\0&1
				\end{matrix}\right)\quad\text{on}\quad (-\infty,\xi).
			\end{cases}
		\end{align*}
	\end{itemize}
\end{rhp}

\subsection{$\bar\partial$-RH problem}
\indent

Then, we make another transformation: $M^{(1)}\rightsquigarrow M^{(2)}$, where $M^{(2)}$ admits a $\bar\partial$-RH problem,  and deform the jump contour $\mathbb{R}$ into the contour $\Sigma$ consisting of four rays: $\Sigma_j=\xi+e^{\frac{i\pi}{4}(2j-1)}\mathbb{R}^+$, $j=1,\dots,4$. See more detail of $\Sigma$ at Figure \ref{fig3}
\begin{figure}[h]
	\centering{\includegraphics[width=0.4\linewidth]{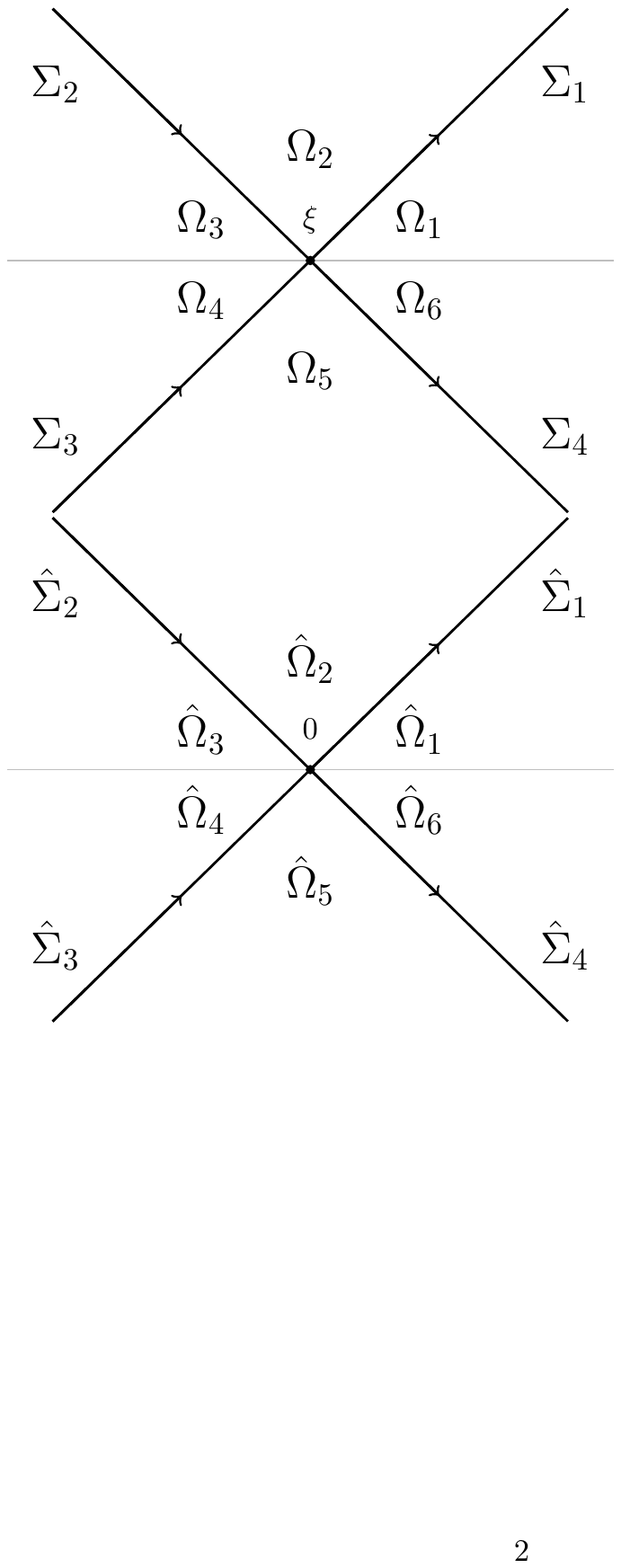}}
	\caption{\label{fig3}The jump contour $\Sigma=\bigcup_{j=1}^4\Sigma_j$ for $\bar\partial$-RH problem \ref{drhp2} and regions $\Omega_j$, $j=1,\dots,6$}
\end{figure}
\begin{lemma}\label{l5.3}
	We can find scalar functions $R_j$: $\Omega_j\to\mathbb{C}$ with the boundary condition:
	\begin{subequations}\label{e17r}
		\begin{align}
			R_1(z)&=\begin{cases}
				r(z)\delta^{-2}(z), \quad z\in(\xi,+\infty),\\
				r(\xi)\delta_0(\xi)^{-2}(z-\xi)^{-2i\nu(\xi)},\quad z\in\Sigma_1,
			\end{cases}\\
		    R_3(z)&=\begin{cases}
		    	\frac{\breve{r}(z)\delta_+^{2}(z)}{1-r(z)\breve{r}(z)}, \quad z\in(-\infty,\xi),\\
		    	\frac{\breve{r}(\xi)\delta^{2}_0(\xi)(z-\xi)^{2i\nu(\xi)}}{1-r(\xi)\breve{r}(\xi)}, \quad z\in\Sigma_2,
		    \end{cases}\\
	        R_4(z)&=\begin{cases}
	        	\frac{r(z)\delta_-^{-2}(z)}{1-r(z)\breve{r}(z)}, \quad z\in(-\infty,\xi),\\
	        	\frac{ r(\xi)\delta_0(\xi)^{-2}(z-\xi)^{-2i\nu(\xi)}}{1-r(\xi)\breve{r}(\xi)}, \quad z\in\Sigma_3,
	        \end{cases}\\
            R_6(z)&=\begin{cases}
            	\breve{r}(z)\delta^{2}(z), \quad z\in(\xi,+\infty),\\
            	\breve{r}(\xi)\delta_0^{2}(\xi)(z-\xi)^{2i\nu(\xi)},\quad z\in\Sigma_4,
            \end{cases}
		\end{align}
	\end{subequations}
such that for $j=1,3,4,6,$
\begin{subequations}\label{e20}
	\begin{align}
		&|R_j(z)|\le C(\sin^2 (\arg(z-\xi))+\big<{\rm Re} z\big>^{-\frac{1}{2}}),\label{e20a}\\
		&|\bar\partial R_j(z)|\le C(|r'({\rm Re} z)|+|\breve{r}'({\rm Re} z)|+|z-\xi|^{-1/2}),
	\end{align}
\end{subequations}
where $\big<\cdot\big>=\sqrt{1+(\cdot)^2}$ and $\delta_0(\xi)=e^{i\beta(\xi,\xi)}$.
\end{lemma}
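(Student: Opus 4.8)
The plan is to construct the $R_j$ by interpolation between their prescribed boundary values on the real axis and on the rays $\Sigma_j$, exactly in the spirit of the Deift--Zhou/$\bar\partial$ construction. I will treat $R_1$ in detail; the other three are handled by the same recipe after the obvious relabelling (and, for $R_3,R_4,R_6$, after dividing by $1-r\breve r$, which is bounded, continuous and nonvanishing on $\mathbb{R}$ by the generic assumption). Write $z-\xi = \rho e^{i\theta}$ with $\theta\in(0,\pi/4)$ in $\Omega_1$. On the real boundary $\theta=0$ we need $R_1 = r(z)\delta^{-2}(z)$, and on the ray $\Sigma_1$, $\theta=\pi/4$, we need the constant-in-$\rho$-modulated value $r(\xi)\delta_0(\xi)^{-2}\rho^{-2i\nu(\xi)}e^{-2i\cdot(\pi/4)\cdot\text{(something)}}$; more precisely I extract from Proposition \ref{p4.1}(5) the factorization $\delta^{-2}(z)=e^{-2i\beta(z,\xi)}(z-\xi)^{-2i\nu(\xi)}$, so that on both pieces of the boundary $R_1$ differs from $(z-\xi)^{-2i\nu(\xi)}$ only by a function that is $\tfrac12$-Hölder up to $z=\xi$. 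Concretely I set
\begin{align*}
	R_1(z) = \Bigl[ r(\xi)\delta_0(\xi)^{-2} + \bigl(r(|z-\xi|+\xi)\delta^{-2}(|z-\xi|+\xi) - r(\xi)\delta_0(\xi)^{-2}\bigr)\cos 2\theta \Bigr](z-\xi)^{-2i\nu(\xi)} \mathcal{K}(|z-\xi|),
\end{align*}
where $\mathcal{K}$ is a smooth cutoff equal to $1$ near $0$ and $0$ for large argument, chosen so that the $\big<\operatorname{Re}z\big>^{-1/2}$ decay holds; one checks $R_1$ reduces to the required values at $\theta=0$ and $\theta=\pi/4$.

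The first main step is the pointwise bound \eqref{e20a}. On the support of $\mathcal{K}$ near $\xi$, the difference quotient $r(s)\delta^{-2}(s)-r(\xi)\delta_0(\xi)^{-2}$ is $O(|s-\xi|^{1/2})$: this uses that $r\in H^1(\mathbb{R})\subset C^{1/2}$ (stated after \eqref{e12r}) together with Proposition \ref{p4.1}(5) giving $|\beta(z,\xi)-\beta(\xi,\xi)|\lesssim |z-\xi|^{1/2}$. Hence the bracket in the display is bounded by a constant plus $O(|z-\xi|^{1/2}\cos 2\theta)$; since $|(z-\xi)^{-2i\nu(\xi)}|=e^{2\nu(\xi)\theta}$ is bounded on $0\le\theta\le\pi/4$, the term carrying $\cos 2\theta$ contributes $\lesssim \min(|z-\xi|^{1/2},1)\cdot\cos2\theta$. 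Near $\theta=\pi/4$, $\cos2\theta\lesssim\sin^2\theta \sim \sin^2(\arg(z-\xi))$; away from there $|z-\xi|^{1/2}\cos2\theta$ is itself $O(\sin^2(\arg(z-\xi)))$ when $|z-\xi|\le 1$ after a short trigonometric estimate, and $O(\big<\operatorname{Re}z\big>^{-1/2})$ where $\mathcal{K}$ forces decay. Splitting into the region $|z-\xi|\le 1$ and $|z-\xi|\ge1$ and combining gives \eqref{e20a}.

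The second main step is the $\bar\partial$-bound. In polar coordinates $\bar\partial = \tfrac12 e^{i\theta}(\partial_\rho + i\rho^{-1}\partial_\theta)$. Differentiating the display, the $\partial_\theta$ hits $\cos 2\theta$, producing $-2\sin 2\theta$ times the Hölder-$\tfrac12$ difference, divided by $\rho=|z-\xi|$; this is where the $|z-\xi|^{-1/2}$ in the bound comes from, since $|z-\xi|^{1/2}/|z-\xi| = |z-\xi|^{-1/2}$. The $\partial_\rho$ hits $r(s)\delta^{-2}(s)$ with $s=|z-\xi|+\xi$, yielding $\tfrac{d}{ds}\bigl(r(s)\delta^{-2}(s)\bigr)$ times $\cos2\theta\cdot\mathcal{K}$; since $\delta^{\pm2}$ and its derivative are bounded (Proposition \ref{p4.1}(2), and $\delta'$ bounded on compacta away from nothing since we work near $\xi$ where $\delta$ is analytic), this is controlled by $|r'(\operatorname{Re}z)|$; the $\breve r'$ term enters through $\delta$, whose logarithmic derivative involves $\nu' \sim r'\breve r + r\breve r'$. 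The contribution of $\mathcal{K}'$ is smooth and compactly supported, hence harmless. Collecting, $|\bar\partial R_1(z)|\lesssim |r'(\operatorname{Re}z)| + |\breve r'(\operatorname{Re}z)| + |z-\xi|^{-1/2}$, which is \eqref{e20}(b).

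I expect the main obstacle to be bookkeeping the behaviour at the stationary point $z=\xi$: one must be careful that the factor $(z-\xi)^{-2i\nu(\xi)}$ is merely bounded (not continuous) there, so all estimates have to be phrased in terms of $|z-\xi|$ and the Hölder modulus of $r\delta^{-2}$, and one must verify that the non-integrable-looking $|z-\xi|^{-1/2}$ singularity in $\bar\partial R_j$ is exactly what is claimed and no worse — in particular that no $|z-\xi|^{-1}$ survives, which hinges on the $\tfrac12$-Hölder continuity upgrading the naive bound by half a power. The far-field decay (producing $\big<\operatorname{Re}z\big>^{-1/2}$ rather than just boundedness) is the other point requiring care, and is what dictates the precise choice of the cutoff $\mathcal{K}$ together with the decay $|r(z)|,|\breve r(z)|\lesssim(1+z^2)^{-1/4}$ from \eqref{e11s}.
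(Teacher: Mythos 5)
Your overall strategy (interpolate between the two prescribed boundary values with a $\cos 2\theta$ weight and use the $\tfrac12$-H\"older continuity of $r$ and of $\beta(\cdot,\xi)$) is the same one the paper uses, but two concrete points in your construction break the lemma as stated. First, the cutoff $\mathcal{K}(|z-\xi|)$ is not allowed: the boundary values (\ref{e17r}) must hold on all of $(\xi,+\infty)$ and all of $\Sigma_1$, because they are exactly what cancels the jump $V^{(1)}$ on $\mathbb{R}$ and produces the prescribed jump $V^{(2)}$ on $\Sigma$ in the transformation (\ref{e37})--(\ref{e38}); with $\mathcal{K}\equiv 0$ at large $|z-\xi|$ your $R_1$ vanishes there and both boundary conditions fail. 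The cutoff is also unnecessary: the $\langle\operatorname{Re}z\rangle^{-1/2}$ term in (\ref{e20a}) is only needed near the real axis, where the $\cos$-weighted term carries $r(\operatorname{Re}z)$ and (\ref{e11s}) gives the decay, while on and near $\Sigma_1$ the bound is absorbed by $\sin^2(\arg(z-\xi))$, which is of order one there. (As literally written your formula also double-counts the power factor: at $\theta=0$ it returns $r(z)\delta^{-2}(z)(z-\xi)^{-2i\nu(\xi)}\mathcal{K}$ rather than $r(z)\delta^{-2}(z)$.)

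Second, and more seriously, your $\bar\partial$ estimate relies on the claim that $\delta'$ is bounded near $\xi$ ``since $\delta$ is analytic there''. That is false: $\xi$ is the endpoint of the cut $(-\infty,\xi]$, and $\delta(z)=e^{i\beta(z,\xi)}(z-\xi)^{i\nu(\xi)}$ gives $\delta'/\delta\sim i\nu(\xi)(z-\xi)^{-1}$. Because you evaluate $r\,\delta^{-2}$ radially at $s=|z-\xi|+\xi$ (a non-analytic dependence on $z$), the $\partial_\rho$ part of $\bar\partial$ hits $\tfrac{d}{ds}\bigl(r(s)\delta^{-2}(s)\bigr)$, which contains an uncompensated term of size $|r(s)|\,|s-\xi|^{-1}$; near the real axis this produces $|z-\xi|^{-1}$ in $|\bar\partial R_1|$, worse than the claimed $|z-\xi|^{-1/2}$, unless $r(\xi)=0$. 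The paper avoids this by keeping the full analytic factor outside the interpolation: $R_1(z)=\bigl(r(\operatorname{Re}z)\cos 2\psi+f_1(z)(1-\cos 2\psi)\bigr)\delta^{-2}(z)$ with $f_1(z)\delta^{-2}(z)=r(\xi)\delta_0(\xi)^{-2}(z-\xi)^{-2i\nu(\xi)}$ analytic in $\Omega_1$, so $\bar\partial$ only ever hits $r(\operatorname{Re}z)$ and $\cos 2\psi$, and the difference quotient $\bigl(|r(\operatorname{Re}z)-r(\xi)|+|r(\xi)-f_1(z)|\bigr)/|z-\xi|$ is controlled by the H\"older-$\tfrac12$ bounds from $r\in H^1$ and Proposition \ref{p4.1}(5). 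If you insist on the radial evaluation you must at least replace $\delta^{-2}(s)$ by $e^{-2i\beta(s,\xi)}$ inside the bracket and estimate $\partial_s\beta(s,\xi)=O(|s-\xi|^{-1/2})$ explicitly; note also that the lemma (and the later Propositions \ref{p5.12}--\ref{p5.13}, which integrate $|r'(u)|$ in $u=\operatorname{Re}s$) states the bound in terms of $r'(\operatorname{Re}z)$, not of the radial variable, so the paper's choice of $\operatorname{Re}z$ is the one that feeds directly into the rest of the argument.
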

\begin{proof}
	In (\ref{e20}), without loss of generality, we only check the case of $j=1,3$ and the estimates for other cases similarly follow.  For $j=1$,  rewriting $z=x+iy=se^{i\psi}+\xi$ and defining
	\begin{align*}
		f_1(z)&=r(\xi)\delta^2(z)\delta_0(\xi)^{-2}(z-\xi)^{-2i\nu(\xi)},\\
		R_1(z)&=(r({\rm Re} z)\cos(2\psi)+f_1(z)(1-\cos(2\psi)))\delta^{-2}(z),
	\end{align*}
which obviously satisfies the boundary condition (\ref{e17r}), we can see from Proposition \ref{p4.1} that
\begin{align*}
	f_1(z)=r(\xi)e^{i(\beta(z,\xi)-\beta(\xi,\xi))}
\end{align*}
is bounded on $\Omega_1$; therefore, with the fact that $|r(x)|\lesssim \big<x\big>^{-\frac{1}{2}}$ in (\ref{e11s}), the inequality (\ref{e20a}) is verified.
Because of the fact that $\bar\partial_z=\frac{1}{2}(\partial_x+i\partial_y)=\frac{e^{i\psi}}{2}(\partial_s+is^{-1}\partial_\psi)$, it follows that
	\begin{align}
		|\bar\partial R_1(z)|=&\Big|\frac{1}{2}r'(x)\cos(2\psi)+\frac{e^{i\psi}}{2}(r(x)-f_1(z))\frac{\sin(2\psi)}{|z-\xi|}\Big||\delta^{-2}(z)|\notag\\
		\lesssim& |r'(x)|+\frac{|r({\rm Re} z)-r(\xi)|+|r(\xi)-f_1(z)|}{|z-\xi|}. \label{e21}
	\end{align}
Noticing that by Proposition \ref{p4.1},
\begin{align*}
	|r(x)-R(\xi)|&=\Big|\int_{\xi}^{x}r'(s)\mathrm{d}s\Big|\le \parallel r\parallel_{H^1}|z-\xi|^{\frac{1}{2}},\\
	|r(\xi)-f_1(z)|&=|r(\xi)||1-e^{i(\beta(z,\xi)-\beta(\xi,\xi))}|
	\lesssim r(\xi)||\beta(z,\xi)-\beta(\xi,\xi)|\lesssim |z-\xi|^\frac{1}{2},
\end{align*}
we consequently obtain the estimate for $|\bar\partial R_1(z)|$ from (\ref{e21}):
\begin{align*}
	|\bar\partial R_1(z)|\lesssim |r(x)|+|z-\xi|^{-\frac{1}{2}}.
\end{align*}
For $j=3$, defining
\begin{align*}
	f_3(z)&=\frac{\breve{r}(\xi)}{1-r(\xi)\breve{r}(\xi)}\delta^{-2}(z)\delta_0(\xi)^{2}(z-\xi)^{2i\nu(\xi)},\\
	R_3(z)&=\frac{\breve r({\rm Re} z)}{1-r({\rm Re} z)\breve{r}({\rm Re} z)}\cos(2\psi)+f_3(z)(1-\cos(2\psi)),
\end{align*}
by similar computation, we obtain the estimate (\ref{e20}) for $j=3$.
\end{proof}

Now, we construct a $2\times2$ matrix function $\mathcal{R}$ on $\bigcup_{j=1}^6\Omega_j$:
\begin{align}\label{e37}
	\mathcal{R}(z)=\begin{cases}
		\left(\begin{matrix}
			1&0\\0&1
		\end{matrix}\right),\quad z\in\Omega_2\cup\Omega_5,\\
	    \left(\begin{matrix}
	    	1&0\\-R_1(z)e^{-2t\varphi(z)}&1
	    \end{matrix}\right),\quad z\in\Omega_1,\\
        \left(\begin{matrix}
        	1&R_3(z)e^{2t\varphi(z)}\\0&1
        \end{matrix}\right),\quad z\in\Omega_3,\\
        \left(\begin{matrix}
        	1&0\\R_4(z)e^{-2t\varphi(z)}&1
        \end{matrix}\right),\quad z\in\Omega_4,\\
        \left(\begin{matrix}
        	1&-R_6(z)e^{2t\varphi(z)}\\0&1
        \end{matrix}\right),\quad z\in\Omega_6,
	\end{cases}
\end{align}
and introduce the second RH problem transformation:
\begin{align}\label{e38}
	M^{(2)}=M^{(1)}\mathcal{R},
\end{align}
where we can see from Figure \ref{fig2} that $\mathcal{R}$ decays to the unit matrix as $t\to\infty$.
Seeing from RH problem \ref{rh5.2}, Lemma \ref{l5.3} and (\ref{e38}), someone obtains that $M^{(2)}$ admits the following $\bar\partial$-RH problem.
\begin{drhp}\label{drhp2}
	Find a $2\times2$ matrix function on $\mathbb{C}\setminus\Sigma$ such that:
	\begin{itemize}
		\item Continuity: $M^{(2)}\in C^1(\mathbb{C}\setminus\Sigma)$.
		
		\item Jump condition: On $\Sigma$,
		\begin{align*}
			M^{(2)}_+&=M^{(2)}_-V^{(2)},\quad
			V^{(2)}=\begin{cases}
				\left(\begin{matrix}
					1&0\\R_1(z)e^{-2t\varphi(z)}&1
				\end{matrix}\right),\quad z\in\Sigma_1,\\
			    \left(\begin{matrix}
			    	1&-R_3(z)e^{2t\varphi(z)}\\0&1
			    \end{matrix}\right),\quad z\in\Sigma_2,\\
		        \left(\begin{matrix}
		        	1&0\\R_4(z)e^{-2t\varphi(z)}&1
		        \end{matrix}\right),\quad z\in\Sigma_3,\\
	            \left(\begin{matrix}
	            	1&-R_6(z)e^{2t\varphi(z)}\\0&1
	            \end{matrix}\right),\quad z\in\Sigma_4.
			\end{cases}
		\end{align*}
	\item Normalization: $M^{(2)}\sim I+\mathcal{O}(z^{-1})$ as $z\to\infty$.
	\item $\bar\partial$-condition: For $\mathbb{C}\setminus\Sigma$, we have that
	\begin{align*}
		\bar\partial M^{(2)}=M^{(2)}\mathcal{R}.
	\end{align*}
	\end{itemize}
\end{drhp}

\subsection{The factorization of $\bar\partial$-RH problem}
\indent

For the sake of that asymptotic analysis for the $\bar\partial$-RH problem is fairly complicate, we shall factorize it into the product of $M^{(2)}_{\text{\tiny{RHP}}}$ and $M^{(3)}$
\begin{align}\label{e25}
	M^{(2)}=M^{(3)}M^{(2)}_{\text{\tiny{RHP}}},
\end{align}
where $M^{(2)}_{\text{\tiny{RHP}}}\equiv M^{(2)}_{\text{\tiny{RHP}}}(z;x,t)$ admits RH problem \ref{rhp2} and $M^{(3)}\equiv M^{(3)}(z;x,t)$ is the solution of $\bar\partial$-problem \ref{d3}.
\begin{rhp}\label{rhp2}
	Find a $2\times2$ matrix function $M^{(2)}_{\text{\tiny{RHP}}}$ holomorphic on $\mathbb{C}\setminus\Sigma$ and satisfying the normalization and jump condition of $\bar\partial$-RH problem \ref{drhp2}.
\end{rhp}
\begin{dbarproblem}\label{d3}
	Find a $2\times2$ matrix function on $\mathbb{C}$ such that:
	\begin{itemize}
		\item Continuity: $M^{(3)}\in C^0(\mathbb{C})\cap C^1(\mathbb{C}\setminus\Sigma)$.
		\item Normalization: $M^{(3)}\sim I+\mathcal{O}(z^{-1})$ as $z\to\infty$.
		\item $\bar\partial$-condition:
		\begin{align*}
			\bar\partial M^{(3)}=M^{(3)}W,
		\end{align*}
	where $W=M^{(2)}_{\text{\tiny{RHP}}}\bar\partial\mathcal{R}(M^{(2)}_{\text{\tiny{RHP}}})^{-1}$.
	\end{itemize}
\end{dbarproblem}

\begin{remark}
	To see the well-definedness of factorization (\ref{e25}), we assume the solvability of RH problem \ref{rhp2} and the existence of $M^{(2)}_{\text{\tiny{RHP}}}$ that will be proven at section \ref{S4.4}; then
	\begin{align*}
		M^{(3)}=M^{(2)}({M^{(2)}_{\text{\tiny{RHP}}}})^{-1}
	\end{align*}
    is well-defined. Seeing from $\bar\partial$-RH problem \ref{drhp2} and RH problem \ref{rhp2}, $M^{(3)}$ does satisfy $\bar\partial$ problem \ref{d3}.
\end{remark}

\begin{figure}[h]
	\centering{\includegraphics[width=0.4\linewidth]{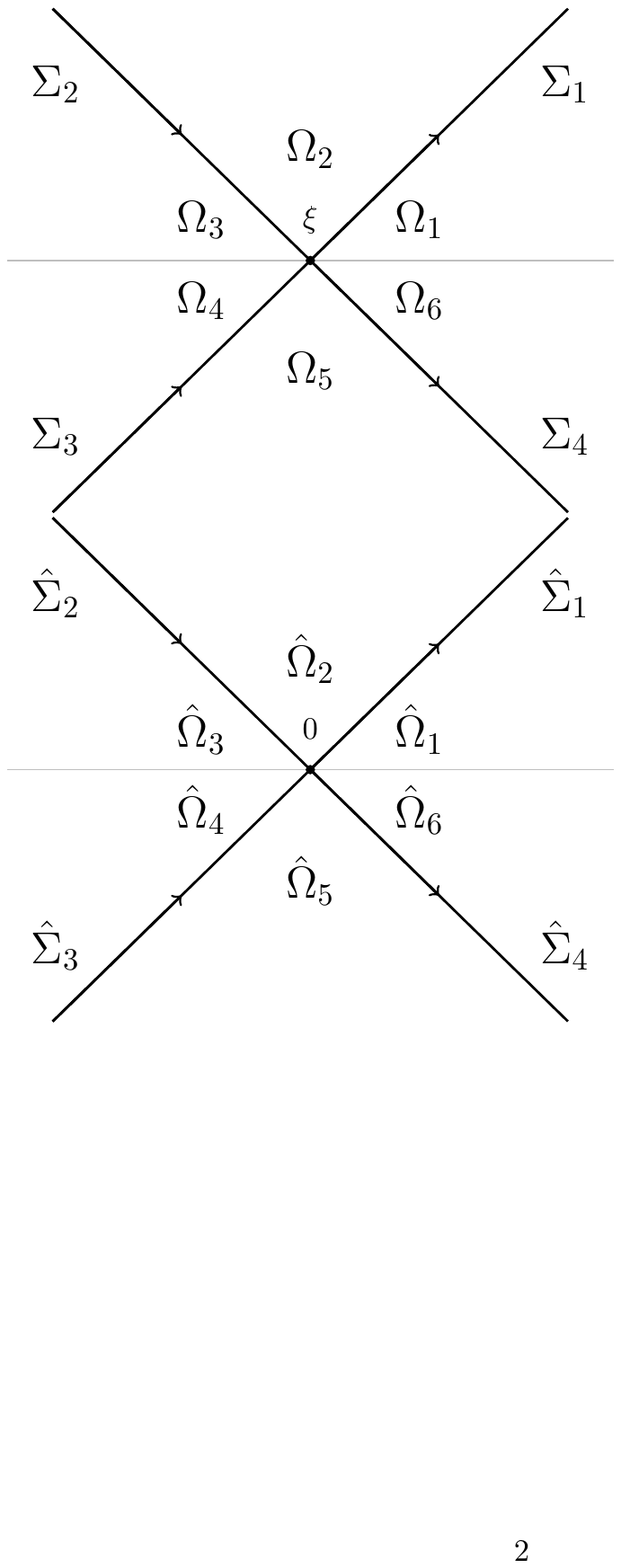}}
	\caption{\label{fig4}The jump contour $\hat\Sigma=\bigcup_{j=1}^4\hat\Sigma_j$ for RH problem \ref{rhp4.8s} and regions $\hat\Omega_j$, $j=1,\dots,6$}
\end{figure}
\subsection{the model RH problem}\label{S4.4}
\indent
	
To solve RH problem \ref{rhp2}, we first make a scaling transformation for $M^{(2)}_\text{\tiny{RHP}}$:
\begin{align}
	\hat M\equiv \hat M(\zeta)=M^{(2)}_\text{\tiny{RHP}}(z),\quad z=\zeta/\sqrt{8t}+\xi\label{e40},
\end{align}
then, by RH problem \ref{rhp2} and (\ref{e40}), $\hat M$ solves the RH problem:
\begin{rhp}\label{rhp4.8s}
	Find a $2\times2$ matrix function on $\mathbb{C}\setminus\hat\Sigma$, where $\hat\Sigma=\Sigma-\xi$ and see more detail about $\hat\Sigma$ at Figure \ref{fig4}, such that:
	\begin{itemize}
		\item Analyticity: $\hat M$ is holomorphic on $\mathbb{C}\setminus \hat \Sigma$.
		\item Normalization: $\hat M(\zeta)\sim I+\mathcal{O}(1/\zeta)$, as $\zeta\to\infty$.
		\item Jump condition: On $\hat\Sigma$, we have
		\begin{align*}
			\hat M_+=\hat M_-\hat V,
		\end{align*}
	where $\hat V\equiv\hat V(\zeta)=V^{(2)}(\zeta/\sqrt{8t}+\xi)$ and by basic computation, $\hat V$ can be written explicitly:
	\begin{align*}
		\hat V(\zeta)&=e^{\frac{i\zeta^2}{4}\hat\sigma_3}\zeta^{i\nu(\xi)\hat\sigma_3}\hat V_0,\\
	\hat V_0&=\begin{cases}
		\left(\begin{matrix}
			1&0\\r(\xi)(\delta_0(\xi))^{-2}(8t)^{i\nu(\xi)}e^{\frac{ix^2}{4t}}&1
		\end{matrix}\right),\quad \zeta\in\hat\Sigma_1,\\
	    \left(\begin{matrix}
	    	1&-\frac{\breve{r}(\xi)}{1-r(\xi)\breve{r}(\xi)}(\delta_0(\xi))^{2}(8t)^{-i\nu(\xi)}e^{-\frac{ix^2}{4t}}\\0&1
	    \end{matrix}\right),\quad \zeta\in\hat\Sigma_2,\\
        \left(\begin{matrix}
        	1&0\\\frac{r(\xi)}{1-r(\xi)\breve{r}(\xi)}(\delta_0(\xi))^{-2}(8t)^{i\nu(\xi)}e^{\frac{ix^2}{4t}}&1
        \end{matrix}\right),\quad \zeta\in\hat\Sigma_3,\\
        \left(\begin{matrix}
        	1&-\breve{r}(\xi)(\delta_0(\xi))^{2}(8t)^{-i\nu(\xi)}e^{-\frac{ix^2}{4t}}\\0&1
        \end{matrix}\right),\quad \zeta\in\hat\Sigma_4.\\
	\end{cases}
	\end{align*}
	\end{itemize}
\end{rhp}

Set
\begin{align}\label{e31}
	\Psi\equiv\Psi(\zeta)&=\hat M(\zeta)e^{\frac{i\zeta^2}{4}\sigma_3}\zeta^{i\nu(\xi)\sigma_3}P_0,
\end{align}
where $P_0$ is a $2\times2$ matrix function that is constant on each $\hat\Omega_j$ for $j=1,\dots,6$:
\begin{align*}
	P_0&=\begin{cases}
		\left(\begin{matrix}
			1&0\\0&1
		\end{matrix}\right), \quad\zeta\in\hat\Omega_2\cup\hat\Omega_5,\\
		\left(\begin{matrix}
			1&0\\r(\xi)\delta_0(\xi)^{-2}(8t)^{i\nu(\xi)}e^{\frac{ix^2}{4t}}&1
		\end{matrix}\right), \quad\zeta\in\hat\Omega_1,\\
		\left(\begin{matrix}
			1&-\frac{\breve{r}(\xi)}{1-r(\xi)\breve{r}(\xi)}\delta_0(\xi)^{2}(8t)^{-i\nu(\xi)}e^{-\frac{ix^2}{4t}}\\0&1
		\end{matrix}\right), \quad\zeta\in\hat\Omega_3,\\
		\left(\begin{matrix}
			1&0\\-\frac{r(\xi)}{1-r(\xi)\breve{r}(\xi)}\delta_0(\xi)^{-2}(8t)^{i\nu(\xi)}e^{\frac{ix^2}{4t}}&1
		\end{matrix}\right), \quad\zeta\in\hat\Omega_4,\\
		\left(\begin{matrix}
			1&\breve{r}(\xi)\delta_0(\xi)^{2}(8t)^{-i\nu(\xi)}e^{-\frac{ix^2}{4t}}\\0&1
		\end{matrix}\right), \quad\zeta\in\hat\Omega_6,
	\end{cases}
\end{align*}
then, it's trivial to verify that $\Psi$ admits RH problem \ref{rhp4.8}.
\begin{rhp}\label{rhp4.8}
	Find a $2\times2$ matrix function on $\mathbb{C}\setminus\mathbb{R}$ such that
	\begin{itemize}
		\item Analyticity: $\Psi$ is holomorphic on $\mathbb{C}\setminus\mathbb{R}$.
		\item normalization: $\Psi(\zeta)e^{-\frac{i\zeta^2}{4}\sigma_3}\zeta^{-i\nu(\xi)\sigma_3}\sim I$ as $\zeta\to\infty$.
		\item Jump condition: The boundary value of $\Psi$ on $\mathbb{R}$ satisfies:
		\begin{align*}
			&\Psi_+=\Psi_-V_\xi, \\
			&V_\xi=\left(
			\begin{matrix}
				1-r(\xi)\breve{r}(\xi)&-\breve{r}(\xi)\delta_0(\xi)^{2}(8t)^{-i\nu(\xi)}e^{-\frac{ix^2}{4t}}\\r(\xi)\delta_0(\xi)^{-2}(8t)^{i\nu(\xi)}e^{\frac{ix^2}{4t}}&1
			\end{matrix}\right).
		\end{align*}
	\end{itemize}
\end{rhp}
Finally, like Step 8 in \cite{deift1993long}, we get a kind of RH problem that we can change it into Weber equation and we obtain the solution in terms of parabolic cylinder functions.
\begin{proposition}\label{p4.10}
	RH problem \ref{rhp4.8} is solvable and the solution $\Psi$ is given by
	\begin{align*}
		\Psi=\left(\begin{matrix}
			\Psi_{1,1}&\Psi_{1,2}\\\Psi_{2,1}&\Psi_{2,2}
		\end{matrix}\right),
	\end{align*}
    where
    \begin{align*}
    	\Psi_{1,1}(\zeta)&=\begin{cases}
    		e^{\frac{\pi\nu}{4}}D_{-i\nu}(e^{-\frac{\pi i}{4}}\zeta),\quad {\rm Im}\zeta>0,\\
    		e^{-\frac{3\pi\nu}{4}}D_{-i\nu}(e^{\frac{3\pi i}{4}}\zeta),\quad {\rm Im}\zeta<0,
    	\end{cases}\\
    	\Psi_{1,2}(\zeta)&=\begin{cases}
    		e^{-\frac{3\pi\nu}{4}}(\beta_1)^{-1}[\partial_\zeta D_{i\nu}(e^{-\frac{3\pi i}{4}}\zeta)+\frac{i\zeta}{2}D_{i\nu}(e^{-\frac{3\pi i}{4}}\zeta)],\quad{\rm Im}\zeta>0,\\
    		e^{\frac{\pi\nu}{4}}(\beta_1)^{-1}[\partial_\zeta D_{i\nu}(e^{\frac{\pi i}{4}}\zeta)+\frac{i\zeta}{2}D_{i\nu}(e^{\frac{\pi i}{4}}\zeta)],\quad{\rm Im}\zeta<0,
    	\end{cases}\\
        \Psi_{2,1}(\zeta)&=\begin{cases}
        	e^{\frac{\pi\nu}{4}}(\beta_2)^{-1}[\partial_\zeta D_{-i\nu}(e^{-\frac{\pi i}{4}}\zeta)-\frac{i\zeta}{2}D_{-i\nu}(e^{-\frac{\pi i}{4}}\zeta)],\quad{\rm Im}\zeta>0,\\
        	e^{-\frac{3\pi\nu}{4}}(\beta_2)^{-1}[\partial_\zeta D_{-i\nu}(e^{\frac{3\pi i}{4}}\zeta)-\frac{i\zeta}{2}D_{-i\nu}(e^{\frac{3\pi i}{4}}\zeta)],\quad{\rm Im}\zeta<0,
        \end{cases}\\
        \Psi_{2,2}(\zeta)&=\begin{cases}
        	e^{-\frac{3\pi\nu}{4}}D_{i\nu}(e^{-\frac{3\pi i}{4}}\zeta),\quad {\rm Im}\zeta>0,\\
        	e^{\frac{\pi\nu}{4}}D_{i\nu}(e^{\frac{\pi i}{4}}\zeta),\quad {\rm Im}\zeta<0,
        \end{cases}
    \end{align*}
    \begin{align*}
    	\beta_1&=\frac{\sqrt{2\pi}e^{\frac{\pi i}{4}}e^{-\frac{\pi\nu}{2}}}{\rho_0\Gamma(-i\nu)}, &\beta_2&=\frac{\nu}{\beta_1},&\rho_0=-\breve r(\xi)\delta_0(\xi)^{2}(8t)^{-i\nu}e^{-\frac{ix^2}{4t}},\quad \nu\equiv\nu(\xi),
    \end{align*}
and $D_a(\eta)$ is a solution of the Weber equation
\begin{align*}
	\partial_{\eta}^2D_a(\eta)+\left[\frac{1}{2}-\frac{\eta^2}{4}+a\right]D_a(\eta)=0.
\end{align*}
\end{proposition}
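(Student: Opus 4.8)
The plan is to follow the classical reduction of this model problem to a Weber (parabolic cylinder) ODE, exactly as in Step~8 of \cite{deift1993long}. The feature that drives everything is that the jump matrix $V_\xi$ in RH problem~\ref{rhp4.8} is \emph{independent of the spectral variable} $\zeta$: it depends only on the parameters, through $r(\xi),\breve r(\xi),\delta_0(\xi),(8t)^{\pm i\nu(\xi)},e^{\pm ix^2/4t}$, and it satisfies $\det V_\xi=1$. First I would use this to set the stage. Since $\det V_\xi=1$, the determinant $\det\Psi$ has no jump on $\mathbb{R}$, hence is entire, and by the normalization $\Psi(\zeta)\sim\big(I+O(\zeta^{-1})\big)\zeta^{i\nu(\xi)\sigma_3}e^{i\zeta^2\sigma_3/4}$ it tends to $1$ at infinity, so $\det\Psi\equiv1$ and $\Psi$ is everywhere invertible; in particular \emph{uniqueness} follows immediately, since the quotient of two solutions is entire and tends to $I$, hence is $\equiv I$. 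Next, differentiating $\Psi_+=\Psi_-V_\xi$ in $\zeta$ and using $\partial_\zeta V_\xi=0$ gives $\partial_\zeta\Psi_+=(\partial_\zeta\Psi_-)V_\xi$, so that
\begin{align*}
	\Xi(\zeta):=\big(\partial_\zeta\Psi(\zeta)\big)\Psi(\zeta)^{-1}
\end{align*}
has equal boundary values from both sides of $\mathbb{R}$ and is therefore entire.

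The second step is to identify $\Xi$. Substituting the expansion $\Psi=\big(I+\Psi_1\zeta^{-1}+O(\zeta^{-2})\big)\zeta^{i\nu(\xi)\sigma_3}e^{i\zeta^2\sigma_3/4}$ into $\Xi$ gives, as $\zeta\to\infty$,
\begin{align*}
	\Xi(\zeta)=\tfrac{i}{2}\,\zeta\,\sigma_3+\tfrac{i}{2}[\Psi_1,\sigma_3]+O(\zeta^{-1});
\end{align*}
since $\Xi$ is entire with at most linear growth, Liouville's theorem forces the $O(\zeta^{-1})$ remainder to vanish, so that
\begin{align*}
	\partial_\zeta\Psi=\Big(\tfrac{i}{2}\,\zeta\,\sigma_3+B\Big)\Psi,\qquad
	B=\tfrac{i}{2}[\Psi_1,\sigma_3]=\begin{pmatrix}0&\ast\\ \ast&0\end{pmatrix},
\end{align*}
with $B$ a \emph{constant} off-diagonal matrix, whose two entries will turn out to be the constants $\beta_1,\beta_2$ of the statement. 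Eliminating one component of each column from this first-order system shows that $\Psi_{1,1}$ and $\Psi_{2,2}$ solve scalar equations of the form $w''+\big(\tfrac{\zeta^2}{4}-c\big)w=0$ with $c$ built from $\tfrac i2$ and the product of the entries of $B$, while $\Psi_{2,1}$ and $\Psi_{1,2}$ are their images under $\partial_\zeta\mp\tfrac{i\zeta}{2}$. Writing $w(\zeta)=D_a(\lambda\zeta)$ turns this into the Weber equation precisely when $\lambda^4=-1$; choosing the branch $\lambda=e^{-i\pi/4}$ in the upper half-plane and $\lambda=e^{3i\pi/4}$ in the lower (and the conjugate choices $e^{-3i\pi/4},e^{i\pi/4}$ for the companion solutions), so that the large-argument asymptotics $D_a(w)\sim w^a e^{-w^2/4}$ hold in the relevant sectors, and then matching the power $\zeta^{\pm i\nu(\xi)}$ prescribed by the normalization, forces the index to be $a=\mp i\nu(\xi)$ and hence $\beta_1\beta_2=\nu(\xi)$. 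At this point the four entries of $\Psi$ are pinned down, up to scalars, as $D_{-i\nu},D_{i\nu}$ and the combinations $\big(\partial_\zeta\mp\tfrac{i\zeta}{2}\big)D_{\mp i\nu}$ of the statement.

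The last step is to fix the scalar prefactors and check the jump. Here I would invoke the standard connection formulae for parabolic cylinder functions, of the type
\begin{align*}
	D_a(z)=e^{-i\pi a}D_a(-z)+\frac{\sqrt{2\pi}}{\Gamma(-a)}\,e^{-i\pi(a+1)/2}\,D_{-a-1}(-iz)
\end{align*}
(and its companions), to express the boundary values of the candidate $\Psi$ from the two half-planes in terms of one another; comparing these relations with the explicit off-diagonal entries of $V_\xi$ forces $\rho_0=-\breve r(\xi)\delta_0(\xi)^{2}(8t)^{-i\nu}e^{-ix^2/4t}$, then $\beta_1=\sqrt{2\pi}\,e^{i\pi/4}e^{-\pi\nu/2}/\big(\rho_0\Gamma(-i\nu)\big)$, and finally $\beta_2=\nu/\beta_1$ from $\beta_1\beta_2=\nu$; the prefactors $e^{\pm\pi\nu/4}$ and $e^{\mp3\pi\nu/4}$ are exactly what is needed for the upper- and lower-half-plane branches to agree with the common normalization as $\zeta\to\infty$. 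It then only remains to verify directly that the $\Psi$ so constructed is holomorphic off $\mathbb{R}$ (parabolic cylinder functions are entire), has the prescribed jump (by the connection formulae just used) and the prescribed asymptotics (from $D_a(z)\sim z^a e^{-z^2/4}$), which yields solvability. I expect the main obstacle to be purely computational but genuinely delicate: keeping the branch cuts of $\zeta^{i\nu}$, the rotation angles $e^{\pm i\pi/4},e^{\pm3i\pi/4}$ and the numerous $\Gamma$- and exponential factors mutually consistent across all four sectors so that the connection formulae reassemble into \emph{exactly} $V_\xi$; one should also keep in mind that, unlike in the local NLS case, $\nu(\xi)$ is in general complex, so the genericity assumption is needed to keep $\Gamma(-i\nu(\xi))$ finite and the construction non-degenerate.
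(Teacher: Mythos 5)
Your proposal is correct and follows essentially the same route as the paper, which itself offers no detailed proof of Proposition \ref{p4.10} beyond the pointer to Step 8 of \cite{deift1993long}: the constant ($\zeta$-independent, unit-determinant) jump $V_\xi$ makes $(\partial_\zeta\Psi)\Psi^{-1}$ entire, Liouville gives the linear system $\partial_\zeta\Psi=(\tfrac{i}{2}\zeta\sigma_3+B)\Psi$ with constant off-diagonal $B$, elimination yields the Weber equation, and the parabolic cylinder connection formulae fix $\beta_1$, $\beta_2$ with $\beta_1\beta_2=\nu$ — precisely the classical construction the authors invoke. Your closing caution about branch/rotation bookkeeping (and about $\nu(\xi)$ being complex in the nonlocal case) is exactly where the genuine care is needed, but it does not affect the validity of the plan.
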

\begin{remark}\label{r5.11}
	In addition to Proposition \ref{p4.10}, if we write $\hat M$ as
	\begin{align}\label{e39}
		\hat M(\zeta)=I+\zeta^{-1}\hat M_{-1}+\mathcal{O}(\zeta^{-2}),
	\end{align}
    then
    \begin{align*}
    	\beta_1=i(\hat M_{-1})_{1,2},\quad \beta_2=-i(\hat M_{-1})_{2,1}.
    \end{align*}
\end{remark}

\subsection{Analysis on a $\bar\partial$-problem}
\indent

In this section, according to $\bar\partial$-problem \ref{d3}, we obtain an integral equation (\ref{e42r}) for it; then, we make some estimates based on the integral operator $\jmath$ defined by (\ref{e44}).

$\bar\partial$-problem \ref{d3} is equivalent to the integral equation:
\begin{align}\label{e42r}
	M^{(3)}(z)=I+\frac{1}{\pi}\iint_\mathbb{C}\frac{M^{(3)}(s) W(s)}{z-s}\mathrm{d^2}s,\quad z\in\mathbb{C},
\end{align}
which is equivalent to
\begin{align}\label{e45r}
	(I-\jmath)M^{(3)}=I,
\end{align}
where $\jmath$ is an integral operator such that for a $2\times2$ matrix function $f$,
\begin{align}\label{e44}
	\jmath [f](z)=\frac{1}{\pi}\iint_\mathbb{C}\frac{f(s)W(s)}{z-s}\mathrm{d^2}s,\quad z\in\mathbb{C}.
\end{align}
To derive the solvability of $\bar\partial$-problem \ref{d3}, we prove that when $t$ is sufficiently large, $\parallel\jmath\parallel_{L^\infty\to L^\infty}$ is small and the resolvent operator $(1-\jmath)^{-1}$ exists on $L^\infty(\mathbb{C})$.
\begin{proposition}\label{p5.12}
	For $t>0$, there is a constant $C>0$, such that
	\begin{align*}
		\parallel\jmath\parallel_{L^\infty\to L^\infty}\le Ct^{-1/4}.
	\end{align*}
\end{proposition}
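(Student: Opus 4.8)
The plan is to reduce the operator bound to a single scalar estimate on a weakly singular planar integral, and then to estimate that integral sector by sector against the pointwise bounds already recorded in Lemma \ref{l5.3}.

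First, from the explicit form (\ref{e44}) of $\jmath$ one has, for any bounded $2\times2$ matrix function $f$,
\begin{align*}
	\parallel\jmath[f]\parallel_{L^\infty}\le\frac{1}{\pi}\parallel f\parallel_{L^\infty}\,\sup_{z\in\mathbb{C}}\iint_\mathbb{C}\frac{|W(s)|}{|z-s|}\,\mathrm{d^2}s,
\end{align*}
so it suffices to prove $\sup_{z}\iint_\mathbb{C}|z-s|^{-1}|W(s)|\,\mathrm{d^2}s\le Ct^{-1/4}$. For this I would first control $W$ pointwise. Since $M^{(2)}_{\text{\tiny{RHP}}}$ has unit determinant, its inverse is bounded entrywise by itself, and by the scaling (\ref{e40}) together with the explicit representation of Proposition \ref{p4.10} — parabolic cylinder functions are bounded on the rays occurring there, the factors $(8t)^{\pm i\nu(\xi)}$, $\delta_0(\xi)^{\pm2}$, $e^{\pm ix^2/4t}$ are unimodular, and $\nu(\xi),r(\xi),\breve r(\xi)$ are uniformly bounded with $1-r(\xi)\breve r(\xi)$ bounded away from $0$ — one obtains $\sup_{z\in\mathbb{C},\,t>0}\parallel M^{(2)}_{\text{\tiny{RHP}}}(z;x,t)\parallel<\infty$. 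Hence $|W(s)|\le c\,|\bar\partial\mathcal{R}(s)|$ with $c$ independent of $t$. By (\ref{e37}), $\bar\partial\mathcal{R}$ vanishes off $\Omega_1\cup\Omega_3\cup\Omega_4\cup\Omega_6$, and on each $\Omega_j$ it has a single nonzero entry $\pm(\bar\partial R_j)(s)\,e^{\mp2t\varphi(s)}$, the exponential being holomorphic in $z$. On the support of $\bar\partial\mathcal{R}$ the signature table (Figure \ref{fig2}) shows that this exponential is exponentially small; writing $s=\xi+u+iv$ one has, for a fixed $c_0>0$, $|e^{\mp2t\varphi(s)}|\le e^{-c_0 t|u|\,|v|}$ there. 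Combining this with the estimates (\ref{e20}),
\begin{align*}
	|W(s)|\lesssim\left(|r'({\rm Re}\,s)|+|\breve r'({\rm Re}\,s)|+|s-\xi|^{-\frac12}\right)e^{-c_0 t|u|\,|v|},\qquad s=\xi+u+iv\in\Omega_j .
\end{align*}

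It then remains to estimate $\iint|z-s|^{-1}|W(s)|\,\mathrm{d^2}s$ over each of the four sectors; the argument is structurally identical in each case, so I describe it for $\Omega_1$, which (after a harmless reflection) we take to be $\{\,\xi+u+iv:\ 0<v<u\,\}$, so that $|u|\,|v|=uv\ge v^{2}$ on it. For the two terms carrying $|r'|$ and $|\breve r'|$: at fixed $v$, bound the one–dimensional integral in $u$ over $(v,\infty)$ by the Cauchy–Schwarz inequality, using $\parallel r'\parallel_2,\parallel\breve r'\parallel_2<\infty$ (as $r,\breve r\in H^1(\mathbb{R})$) and $\int_\mathbb{R}\bigl((u-a)^2+(v-b)^2\bigr)^{-1}\mathrm{d}u=\pi|v-b|^{-1}$, and replace $e^{-c_0 tuv}$ by $e^{-c_0 t v^{2}}$; the remaining integral $\int_0^\infty e^{-c_0 t v^{2}}|v-b|^{-1/2}\,\mathrm{d}v$, after the substitution $v\mapsto v/\sqrt t$, equals $t^{-1/4}\int_0^\infty e^{-c_0 v^{2}}|v-b'|^{-1/2}\,\mathrm{d}v$, and the latter is bounded uniformly in $b'\in\mathbb{R}$ (split the domain at $|v-b'|=1$). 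For the term carrying $|s-\xi|^{-1/2}$: rescale \emph{both} variables, $(u,v)\mapsto(u,v)/\sqrt t$, and set $Z=\sqrt t\,(z-\xi)$; the integral becomes $t^{-1/4}$ times the fixed integral $\iint_{\{0<v<u\}}|u+iv|^{-1/2}e^{-c_0 uv}\,|Z-(u+iv)|^{-1}\,\mathrm{d}u\,\mathrm{d}v$, which is finite and bounded uniformly in $Z\in\mathbb{C}$ because $|u+iv|^{-1/2}e^{-c_0 uv}$, as a function on the cone $\{0<v<u\}$, lies in $L^1(\mathbb{C})\cap L^p(\mathbb{C})$ for some $p>2$ (integrable singularity at the origin; $e^{-c_0 uv}$ forces $L^1\cap L^p$ decay at infinity), so that its convolution with $|Z-\cdot|^{-1}$ is uniformly bounded. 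Summing over the four sectors gives $\sup_z\iint|z-s|^{-1}|W(s)|\,\mathrm{d^2}s\lesssim t^{-1/4}$, which is the claim.

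The main difficulty is precisely this last step: organizing the weakly singular planar integrals so that the decay exponent comes out \emph{sharp} — $t^{-1/4}$ and not $t^{-1/4+\varepsilon}$ — and uniformly in $z$. This is why it is important to treat the $|r'|,|\breve r'|$ contributions by iterated ($u$–then–$v$) integration with Cauchy–Schwarz rather than a single crude planar H\"older estimate, and to retain the full exponential weight when handling the $|s-\xi|^{-1/2}$ contribution (discarding it near $s=\xi$ would already spoil the rate). A secondary point, needed to pass from the $\bar\partial$–equation $\bar\partial M^{(3)}=M^{(3)}W$ of $\bar\partial$–problem \ref{d3} to a usable bound on $W$, is the uniform-in-$(z,x,t)$ boundedness of the model solution $M^{(2)}_{\text{\tiny{RHP}}}$, which itself rests on the explicit parabolic-cylinder-function formula of Proposition \ref{p4.10}.
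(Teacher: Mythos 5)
Your proposal follows essentially the same route as the paper's own proof: reduce the operator norm to $\sup_{z}\iint_{\mathbb{C}}|z-s|^{-1}|W(s)|\,\mathrm{d}^2s$, pull out $\parallel M^{(2)}_{\text{\tiny{RHP}}}\parallel_{L^\infty}\parallel (M^{(2)}_{\text{\tiny{RHP}}})^{-1}\parallel_{L^\infty}$, insert the bounds of Lemma \ref{l5.3}, and estimate the two resulting scalar integrals sector by sector --- Cauchy--Schwarz in $u$ for the $|r'|+|\breve r'|$ piece and a H\"older-type argument for the $|s-\xi|^{-1/2}$ piece, each giving $t^{-1/4}$. The only packaging difference is in the second piece: you rescale $(u,v)\mapsto (u,v)/\sqrt{t}$ and invoke the uniform boundedness of the convolution of $|Z-\cdot|^{-1}$ with a fixed $L^1\cap L^p$ ($p>2$) function on the cone, whereas the paper applies H\"older with exponents $p>2$, $q<2$ in $u$ and then estimates the remaining $v$-integral; the two are equivalent and both yield the sharp rate. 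Your treatment of $\int_0^\infty e^{-ctv^2}|v-b|^{-1/2}\mathrm{d}v$ uniformly in $b$ is in fact slightly more careful than the corresponding step in the paper.

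One caveat: your justification of $\sup_{z,t}\parallel M^{(2)}_{\text{\tiny{RHP}}}\parallel<\infty$ leans on the claim that the factors $(8t)^{\pm i\nu(\xi)}$ and $\delta_0(\xi)^{\pm2}$ are unimodular. In this nonlocal setting $\breve r\neq \bar r$, so $\nu(\xi)=-\frac{1}{2\pi}\log\bigl(1-r(\xi)\breve r(\xi)\bigr)$ is generically complex; hence $|(8t)^{i\nu(\xi)}|=(8t)^{-{\rm Im}\,\nu(\xi)}$ is not $1$ (this is precisely the origin of the $t^{-{\rm Im}\,\nu-\frac12}$ rate and of the restriction ${\rm Im}\,\nu(\xi)<\frac14$ in Section \ref{S5}), and $\delta_0(\xi)=e^{i\beta(\xi,\xi)}$ need not be unimodular either. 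So, as written, that step does not establish boundedness of the model solution uniformly in $t$. To be fair, the paper is no more careful here: it asserts boundedness of $M^{(2)}_{\text{\tiny{RHP}}}$ and its inverse only from $\det M^{(2)}_{\text{\tiny{RHP}}}\equiv1$ and the absence of poles, without addressing the $t$-dependence through $\nu(\xi)$. Apart from this shared soft spot, your estimates are correct and give the stated $Ct^{-1/4}$ bound.
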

\begin{proof}
	We only consider the case of matrix function with support region in $\hat\Omega_1$, and cases for other regions follow in the similar way. Setting $f(s)\in L^\infty(\mathbb{C})$ and $s=u+iv$, we obtain the modular estimate for $z=x+iy\in\mathbb{C}$,
	\begin{align}
	|\jmath[f](z)|\le&\frac{1}{\pi}\iint_{\hat\Omega_1}\frac{|f(s)M^{(2)}_\text{\tiny{RHP}}(s)\bar\partial\mathcal{R}(s)M^{(2)}_\text{\tiny{RHP}}(s)^{-1}|}{|z-s|}\mathrm{d}s\notag\\
		\le& \frac{\parallel f\parallel_{L^\infty}}{\pi}\parallel M^{(2)}_\text{\tiny{RHP}}\parallel_{L^\infty}\parallel {M^{(2)}_\text{\tiny{RHP}}}^{-1}\parallel_{L^\infty}\iint_{\hat\Omega_1}\frac{|\bar\partial R_1(s)|e^{-8tv(u-\xi)}}{|s-z|}\mathrm{d^2}s.\label{e41r}
	\end{align}
Seeing from RH problem \ref{rhp2}, we find that $\det M^{(2)}_{\text{\tiny{RHP}}}\equiv1$, i.e., $M^{(2)}_{\text{\tiny{RHP}}}$ is invertible;
then, since $M^{(2)}_{\text{\tiny{RHP}}}$ has no pole on the complex plane, $\parallel M^{(2)}_{\text{\tiny{RHP}}}\parallel_{L^\infty}$ and $\parallel {M^{(2)}_{\text{\tiny{RHP}}}}^{-1}\parallel_{L^\infty}$ are bounded;
therefore, by (\ref{e20}) and (\ref{e41r}), we obtain that
\begin{align}
	|\jmath[f](z)|\lesssim(I_4+I_5)\parallel f\parallel_{L^\infty}, \label{e41}
\end{align}
where
\begin{align*}
	I_4&=\iint_{\hat\Omega_1}\frac{(|r'(u)|+ |\breve{r}'(u)|)e^{-8tv(u-\xi)}}{|s-z|}\mathrm{d^2}s,\\
	I_5&=\iint_{\hat\Omega_1}\frac{|s-\xi|^{-\frac{1}{2}}e^{-8tv(u-\xi)}}{|s-z|}\mathrm{d^2}s.
\end{align*}
Since $r(u),\ \breve{r}(u)\in H^1(\mathbb{R})$, we have
\begin{align}
	I_4\le& \int_{0}^{\infty}e^{-tv^2}\int_{v+\xi}^\infty\frac{|r'(u)|+|\breve r'(u)|}{|s-z|}\mathrm{d}u\mathrm{d}v\notag\\
	\le&(\parallel r'\parallel_{L^2}+\parallel \breve{r}'\parallel_{L^2})\int_{0}^{\infty}e^{-tv^2}\mathrm{d}v\left(\int_{v+\xi}^{\infty}|s-z|^{-2}\mathrm{d}u\right)^\frac{1}{2}\notag\\
	\lesssim&\int_{0}^{\infty}\frac{e^{-tv^2}}{|v-y|^\frac{1}{2}}\mathrm{d}v\lesssim\int_{0}^{\infty}\frac{e^{-tv^2}}{v^{\frac{1}{2}}}\lesssim t^{-\frac{1}{4}}. \label{e42s}
\end{align}
For the boundedness of $I_2$, by H\"older's inequality, we have
\begin{align}
	I_5&\le\int_{0}^{\infty}e^{-8tv^2}\int_{v+\xi}^{\infty}\frac{\mathrm{d}u\mathrm{d}v}{|s-\xi|^\frac{1}{2}|s-z|}\notag\\
	&\le \int_{0}^{\infty}e^{-8tv^2}\left(\int_{v+\xi}^{\infty}|s-\xi|^{-\frac{p}{2}}\right)^\frac{1}{p}\left(\int_{v+\xi}^{\infty}|s-z|^{-q}\right)^\frac{1}{q}\notag\\
	&\lesssim \int_{0}^{\infty}e^{-tv^2}v^{\frac{1}{p}-\frac{1}{2}}|v-y|^{\frac{1}{q}-1}\mathrm{d}v. \label{e42}
\end{align}
On the one hand, the right side of (\ref{e42}) is obviously less than  $\int_{0}^{\infty}\frac{e^{-tv^2}}{v^{\frac{1}{2}}}\mathrm{d}v\lesssim t^{-\frac{1}{4}}$ as $y\le0$; on the other hand, when $y>0$,
\begin{align*}
	&\int_{0}^{\infty}e^{-tv^2}v^{\frac{1}{p}-\frac{1}{2}}|v-y|^{\frac{1}{q}-1}\mathrm{d}v\\
	=&\int_{0}^{y}e^{-tv^2}v^{\frac{1}{p}-\frac{1}{2}}(y-v)^{\frac{1}{q}-1}\mathrm{d}v
	+\int_{y}^{\infty}e^{-tv^2}v^{\frac{1}{p}-\frac{1}{2}}(v-y)^{\frac{1}{q}-1}\mathrm{d}v\\
	\le&t^{-\frac{1}{4}}\int_{0}^{1}e^{-8tv^2y^2}(tv^2y^2)^{\frac{1}{4}}v^{\frac{1}{p}-1}(1-v)^{\frac{1}{q}-1}\mathrm{d}v+\int_0^\infty e^{-8tv^2}v^{-\frac{1}{2}}\mathrm{d}v\lesssim t^{-\frac{1}{4}},
\end{align*}
which yield
\begin{align}\label{e43}
	I_5\lesssim t^{-\frac{1}{4}}.
\end{align}
With (\ref{e41}, \ref{e42s}, \ref{e43}), the result is verified.
\end{proof}
It's a consequence of (\ref{e45r}) and Proposition \ref{p5.12} that for $t>0$ large enough, the $L^{\infty}$-norm of $M^{(3)}$ is bounded.
Since we have confirmed the solvability of $\bar\partial$-problem for large $t$, we now determine the long-time asymptotic behavior of the second coefficient in the Laurent expansion for $M^{(3)}$:
\begin{align}
	&M^{(3)}(z)=I+M^{(3)}_{-1}z^{-1}+\mathcal{O}(z^{-2}),\notag\\
	&M^{(3)}_{-1}=\frac{1}{\pi}\iint_\mathbb{C}M^{(3)}(s)W(s)\mathrm{d^2}s. \label{e45}
\end{align}
\begin{proposition}\label{p5.13}
	For $t>0$, the coefficient $M^{(3)}_{-1}$ satisfies that
	\begin{align*}
		\vert M^{(3)}_{-1}\vert\lesssim t^{-\frac{3}{4}}.
	\end{align*}
\end{proposition}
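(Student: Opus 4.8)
The plan is to estimate $M^{(3)}_{-1}$ directly from the representation (\ref{e45}). First I would record two facts that are already available: by (\ref{e45r}) together with Proposition \ref{p5.12}, for $t$ large enough the resolvent $(I-\jmath)^{-1}$ exists on $L^\infty(\mathbb{C})$ and $\|M^{(3)}\|_{L^\infty(\mathbb{C})}\lesssim 1$, and, as used in the proof of Proposition \ref{p5.12}, $M^{(2)}_{\text{\tiny{RHP}}}$ and $(M^{(2)}_{\text{\tiny{RHP}}})^{-1}$ are bounded on $\mathbb{C}$. Since $W=M^{(2)}_{\text{\tiny{RHP}}}\,\bar\partial\mathcal{R}\,(M^{(2)}_{\text{\tiny{RHP}}})^{-1}$, these reduce the claim to the single estimate $\iint_{\mathbb{C}}|\bar\partial\mathcal{R}(s)|\,\mathrm{d^2}s\lesssim t^{-3/4}$. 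Because $\mathcal{R}\equiv I$ on $\Omega_2\cup\Omega_5$, the integrand is supported on $\Omega_1\cup\Omega_3\cup\Omega_4\cup\Omega_6$, and by the symmetry of the configuration it is enough to treat $\Omega_1$, on which, from (\ref{e37}), $|\bar\partial\mathcal{R}(s)|=|\bar\partial R_1(s)|\,e^{-8tv(u-\xi)}$ for $s=u+iv\in\Omega_1=\{v>0,\ u>\xi+v\}$.

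Next I would insert the pointwise bound of Lemma \ref{l5.3}, namely $|\bar\partial R_1(s)|\lesssim |r'(u)|+|\breve r'(u)|+|s-\xi|^{-1/2}$, and split the integral over $\Omega_1$ as $J_1+J_2$, where $J_1$ carries the weight $|r'(u)|+|\breve r'(u)|$ and $J_2$ carries the weight $|s-\xi|^{-1/2}$. For $J_1$ I would apply the Cauchy--Schwarz inequality in $u$: since $\int_{\xi+v}^\infty e^{-16tv(u-\xi)}\,\mathrm{d}u=(16tv)^{-1}e^{-16tv^2}$, the inner $u$-integral is at most $(\|r'\|_{L^2}+\|\breve r'\|_{L^2})(16tv)^{-1/2}e^{-8tv^2}$, so that $J_1\lesssim t^{-1/2}\int_0^\infty v^{-1/2}e^{-8tv^2}\,\mathrm{d}v$. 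For $J_2$ I would use $|s-\xi|\ge u-\xi$ on $\Omega_1$ to get $\int_{\xi+v}^\infty (u-\xi)^{-1/2}e^{-8tv(u-\xi)}\,\mathrm{d}u\le\int_0^\infty r^{-1/2}e^{-8tvr}\,\mathrm{d}r=\sqrt{\pi}(8tv)^{-1/2}$, so that $J_2\lesssim t^{-1/2}\int_0^\infty v^{-1/2}e^{-8tv^2}\,\mathrm{d}v$ as well. The remaining Gaussian-type integral is handled by the scaling $w=\sqrt{t}\,v$, which gives $\int_0^\infty v^{-1/2}e^{-8tv^2}\,\mathrm{d}v\lesssim t^{-1/4}$, hence $J_1,J_2\lesssim t^{-3/4}$. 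Running the same computation on $\Omega_3,\Omega_4,\Omega_6$, where the corresponding entry of (\ref{e37}) carries an exponential that decays on the relevant sector, then yields $\iint_\mathbb{C}|\bar\partial\mathcal{R}|\lesssim t^{-3/4}$ and the proposition.

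I expect the calculation to be essentially routine, its structure mirroring that of Proposition \ref{p5.12}. The one conceptual point worth flagging is why the exponent improves from $t^{-1/4}$ to $t^{-3/4}$: here the Cauchy kernel $|s-z|^{-1}$ present in $\jmath$ is gone, so each of the two weights $|r'|+|\breve r'|$ and $|s-\xi|^{-1/2}$ can be integrated in $u$ against the decaying exponential $e^{-8tv(u-\xi)}$ directly, and every such $u$-integration contributes a clean factor $(tv)^{-1/2}$ rather than the weaker $|v-y|^{-1/2}$ that Proposition \ref{p5.12} is forced to deal with; this extra $t^{-1/2}$ is the whole gain. The only place that needs care is the vertex $s=\xi$, where $|s-\xi|^{-1/2}$ is singular: one should confirm that $\int_0^\infty r^{-1/2}e^{-8tvr}\,\mathrm{d}r$ converges (which it does, to $\sqrt{\pi}(8tv)^{-1/2}$) so that no logarithmic factor sneaks in — this is the main, mild, obstacle.
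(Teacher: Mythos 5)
Your proposal is correct and follows the same overall strategy as the paper: bound $|M^{(3)}_{-1}|$ by $\|M^{(3)}\|_{L^\infty}\|M^{(2)}_{\text{\tiny{RHP}}}\|_{L^\infty}\|(M^{(2)}_{\text{\tiny{RHP}}})^{-1}\|_{L^\infty}\iint_{\Omega_1}|\bar\partial R_1|e^{-8tv(u-\xi)}\,\mathrm{d}^2s$, split according to Lemma \ref{l5.3} into the $|r'|+|\breve r'|$ piece and the $|s-\xi|^{-1/2}$ piece, and use Cauchy--Schwarz in $u$ for the first piece exactly as in the paper's $I_6$. The only genuine difference is the second piece: the paper estimates its $I_7$ by H\"older's inequality with exponents $(4,4/3)$, whereas you use the pointwise bound $|s-\xi|\ge u-\xi$ and an explicit $\Gamma(1/2)$ integral; your route is simpler and, as written in the paper, actually safer, since the paper's choice of exponents leads to the borderline integral $\int_0^\infty e^{-8tv^2}v^{-1}\mathrm{d}v$, which diverges at $v=0$ (a non-borderline choice of exponents, e.g.\ $p=3$, repairs it), while your computation produces the integrable weight $v^{-1/2}$. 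One small slip to fix in your $J_2$: the displayed inequality $\int_{\xi+v}^{\infty}(u-\xi)^{-1/2}e^{-8tv(u-\xi)}\mathrm{d}u\le\sqrt{\pi}(8tv)^{-1/2}$ discards the Gaussian factor, and with only $\sqrt{\pi}(8tv)^{-1/2}$ the subsequent $v$-integral $\int_0^\infty v^{-1/2}\mathrm{d}v$ diverges at infinity; you must keep the lower limit $u\ge\xi+v$, e.g.\ write $\int_v^\infty r^{-1/2}e^{-8tvr}\mathrm{d}r\le e^{-8tv^2}\int_0^\infty\rho^{-1/2}e^{-8tv\rho}\mathrm{d}\rho=e^{-8tv^2}\sqrt{\pi}(8tv)^{-1/2}$ using $r^{-1/2}\le(r-v)^{-1/2}$, which is exactly the bound your next line assumes. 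With that one-line correction the argument closes and gives $J_1,J_2\lesssim t^{-3/4}$ as claimed.
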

\begin{proof}
	To bound the norm, without loss of generality, we consider the integral region in (\ref{e45}) as $\Omega_1$; then, by the boundedness of $\parallel M^{(3)}\parallel_{L^\infty}$, $\parallel M^{(2)}_{\text{\tiny{RHP}}}\parallel_{L^\infty}$ and $\parallel {M^{(2)}_{\text{\tiny{RHP}}}}^{-1}\parallel_{L^\infty}$, we obtain
	\begin{align}
		|M^{(3)}_{-1}|&\le \frac{1}{\pi}\parallel M^{(3)}\parallel_{L^\infty}\parallel M^{(2)}_{\text{\tiny{RHP}}}\parallel_{L^\infty}\parallel {M^{(2)}_{\text{\tiny{RHP}}}}^{-1}\parallel_{L^\infty}\iint_{\Omega_1}|\bar\partial \mathcal{R}(s)|\mathrm{d^2}s \notag\\
		&\lesssim \iint_{\Omega_1}|\bar\partial R_1(s)e^{-2t\varphi(s)}|\mathrm{d^2}s\le I_6+I_7,\label{e47}
	\end{align}
where
\begin{align*}
	I_6=\iint_{\Omega_1}(|r'(u)|+|\breve{r}'(u)|)e^{-8tv(u-\xi)}\mathrm{d}u\mathrm{d}v,\quad I_7=\iint_{\Omega_1}|s-\xi|^{-\frac{1}{2}}e^{-8tv(u-\xi)}\mathrm{d}u\mathrm{d}v.
\end{align*}
For the boundedness of $I_3$ and $I_4$, by Schwartz inequality and variable substitutions,
\begin{align*}
	I_6&=\int_{0}^{\infty}e^{-8tv^2}\int_{\xi+v}^{\infty}(|r'(u)|+|\breve{r}'(u)|)e^{-8tv(u-v-\xi)}\mathrm{d}u\mathrm{d}v\\
	&\le\int_{0}^{\infty}e^{-8tv^2}(\parallel r'\parallel_{L^2}+\parallel \breve{r}'\parallel_{L^2})\left(\int_{0}^{\infty}e^{-16tuv}\mathrm{d}u\right)^\frac{1}{2}\mathrm{d}v\\
	&\lesssim \int_{0}^{\infty}\frac{e^{-8tv^2}}{4\sqrt{tv}}\mathrm{d}v\lesssim t^{-\frac{3}{4}},\\
	I_7&=\int_{0}^{\infty}e^{-8tv^2}\int_{\xi+v}^{\infty}((u-\xi)^2+v^2)^{-\frac{1}{4}}e^{-8tv(u-v-\xi)}\mathrm{d}u\mathrm{d}v\\
	&\le\int_{0}^{\infty}e^{-8tv^2}\left(\int_{v}^{\infty}(u^2+v^2)^{-1}\mathrm{d}u\right)^\frac{1}{4}\left(\int_{0}^{\infty}e^{-\frac{32}{3}tuv}\mathrm{d}u\right)^\frac{3}{4}\mathrm{d}v\\
	&\lesssim\int_{0}^{\infty}e^{-8tv^2}t^{-\frac{3}{4}}v^{-1}\mathrm{d}v\lesssim t^{-\frac{3}{4}},
\end{align*}
which combined with (\ref{e47}) yield the result.
\end{proof}

\section{Long-time asymptotics of the NNLS equation}\label{S5}
\indent

By the matrix transformation in section \ref{s4}, we learn that
\begin{align*}
	M(z)=M^{(3)}(z)\hat M(\sqrt{8t}(z-\xi))\mathcal{R}^{-1}\delta(z)^{\sigma_3},
\end{align*}
then, taking $z\to\infty$ for $z\in\Omega_2$ and using (\ref{e18}), (\ref{e37}) and (\ref{e39}), we get the asymptotic property of $M$
\begin{align*}
	M\sim I+\left(M^{(3)}_{-1}+\frac{\hat M_{-1}}{\sqrt{8t}}-\left(i\int_{-\infty}^{\xi}\nu(s)\mathrm{d}s\right)^{\sigma_3}\right)z^{-1}+\mathcal{O}(z^{-2}),
\end{align*}
which combined with (\ref{e17}), Remark \ref{r5.11} and Proposition \ref{p5.13} yields that when $\arg(1-r(\xi)\breve{r}(\xi))>-\frac{\pi}{2}$, i.e., ${\rm Im}\nu(\xi)<\frac{1}{4}$,
\begin{align*}
	q(x,t)&=2\left(\frac{\beta_1}{\sqrt{8t}}+i(M^{(3)}_{-1})_{1,2}\right)=\alpha(\xi)t^{-{\rm Im}\nu-\frac{1}{2}}+\mathcal{O}(t^{-\frac{3}{4}}),\\
	\alpha(\xi)&=-\frac{\sqrt{\pi}e^{-\frac{\pi \nu}{2}+\frac{\pi i}{4}+4it\xi^2}t^{i{\rm Re}\nu}}{\breve r(\xi)8^{-i\nu}\delta_0(\xi)^{2}\Gamma(-i\nu)}.
\end{align*}

\end{document}